\newtheorem{theorem}{Theorem}
\newtheorem{proposition}{Proposition}
\newtheorem{corollary}{Corollary}
\newtheorem{lemma}{Lemma}
\newtheorem{conjecture}{Conjecture}
\newtheorem{example}{Example}
\newtheorem{definition}{Definition}
\theoremstyle{remark}
\begin{document}
	\title{On the $2$-binomial complexity of the generalized Thue-Morse words}
    
    
    \author[hzau]{Xiao-Tao L\"{u}}
    \ead{xiaotaoLv@mail.hzau.edu.cn}

    \author[hzau]{Jin Chen}
    \ead{cj@mail.hzau.edu.cn}

    
    \author[hust]{Zhi-Xiong Wen}
    \ead{zhi-xiong.wen@hust.edu.cn}
    
     \author[scut]{Wen Wu\corref{aaa}}
    \ead{wuwen@scut.edu.cn}
    \cortext[aaa]{Corresponding author.}
    
    \address[hzau]{College of Science, Huazhong Agricultural University, Wuhan 430070, China.}
    \address[hust]{School of Mathematics and Statistics, Huazhong University of Science and Technology, Wuhan, 430074, China.}
    \address[scut]{School of Mathematics, South China University of Technology, Guangzhou, 510641, China}
    

    \begin{keyword}
    	generalized Thue-Morse word\sep  $k$-binomial equivalence\sep $k$-binomial complexity
    \end{keyword}
	
	\begin{abstract}
		In this paper, we study the $2$-binomial complexity $b_{\mathbf{t}_{m},2}(n)$ of the generalized Thue-Morse words $\mathbf{t}_{m}$ for every integer $m\geq 3$. We obtain the exact value of $b_{\mathbf{t}_{m},2}(n)$ for every integer $n\geq m^{2}$. As a consequence, $b_{\mathbf{t}_{m},2}(n)$ is ultimately periodic with period $m^{2}$. This result partially answers a question of M. Lejeune, J. Leroy and M. Rigo [Computing the $k$-binomial complexity of the Thue-Morse word, J. Comb. Theory Ser. A, {\bf 176} (2020) 105284].
	\end{abstract}

	\maketitle

\section{Introduction}
Abelian equivalence of words has been a subject of great interest for quite a long time.  Given a finite non-empty set $\mathcal{A}$, let $\mathcal{A}^{*}$ and $\mathcal{A}^{\mathbb{N}}$ denote the set of finite words and the set of infinite words  over $\mathcal{A}$ respectively. Two words $u,v\in\mathcal{A}^{*}$ are \emph{abelian equivalent}, denoted by $u\sim_{ab}v$, if $|u|_{a}=|v|_{a}$ for every $a\in\mathcal{A}$ where $|u|_{a}$ denotes the number of occurrences of the letter $a$ in $u$. The notion has been studied in the relation of abelian complexity of infinite words \cite{CW19,LCWW17,MR13,RSZ11}, abelian repetitions and avoidance \cite{CRSZ11,K92,PW20}, and other topics \cite{FMS,PW19,PZ13}; see also \cite{P19} and references therein.


As a generalization of abelian equivalence, Rigo and Salimov \cite{RS15} introduced the notion of $k$-binomial equivalence based on binomial coefficients of words. The \emph{binomial coefficient} $\binom{u}{v}$ of two words $u$ and $v$ is the number of times that $v$ occurs as a (not necessarily contiguous) subsequence of $u$.  Binomial coefficients of finite words have been successfully applied in several fields: $p$-adic topology \cite{BCP89}, non-commutative extension of Mahler's theorem on interpolation series \cite{PS14}, formal language theory \cite{KKS15}, Parikh matrices and  a generalization of Sierpi\'{n}ski's triangle \cite{LRS16}. Many classical questions in combinatorics on words can be considered in the binomial context. Avoiding binomial squares and cubes was considered in \cite{RRS15}. The problem of testing whether two words are $k$-binomially equivalent was discussed in \cite{FJK15}. Let $k\in\mathbb{Z}^{+}\cup \{+\infty\}$. Two words $u$ and $v$ are \emph{$k$-binomially equivalent}, denoted by $u\sim_{k}v$, if $\binom{u}{x}=\binom{v}{x}$ for all words $x$ of length at most $k$. Note that $u\sim_{+\infty} v$ if and only if $u=v$, while $\sim_{1}$ corresponds to the usual notion of abelian equivalence $\sim_{ab}$. Thus one can regard the notion of $k$-binomial equivalence as gradually bridging the gap between abelian equivalence ($k=1$) and equality ($k=+\infty$). An independent generalization of abelian equivalence is $k$-abelian equivalence where one counts factors of length at most $k$ \cite{KSZ13}; for more details, see \cite{R17}. 

Given an infinite word $\mathbf{w}=w(0)w(1)w(2)\cdots\in\mathcal{A}^{\mathbb{N}}$, for every positive integer $n$, let $\mathcal{F}_{\mathbf{w}}(n)$ denote the set of all factors of $\mathbf{w}$ of length $n$. That is, $\mathcal{F}_{\mathbf{w}}(n)=\{w(i)w(i+1)\cdots w(i+n-1) \mid i\geq 0\}$. Set $\rho_{\mathbf{w}}(n)=\sharp (\mathcal{F}_{\mathbf{w}}(n))$. The function $\rho_{\mathbf{w}}$: $\mathbb{Z}^{+}\to\mathbb{Z}^{+}$ is called the factor complexity  function of $\mathbf{w}$.  A fundamental result due to Hedlund and Morse \cite{MH38}
states that an infinite word $\mathbf{w}$ is ultimately periodic if and only if $\rho_{\mathbf{w}}(n)\leq n$ for some $n\geq 1$. Words of factor complexity $\rho_{\mathbf{w}}(n)=n+1$ are called Sturmian words.  Analogously, for each $k\in \mathbb{Z}^{+}\cup \{+\infty\}$, the \emph{$k$-binomial complexity} of $\mathbf{w}$ is define as $b_{\mathbf{w},k}(n)=\sharp(\mathcal{F}_{\mathbf{w}}(n)/\sim_{k})$. The function $b_{\mathbf{w},k}(n)$: $\mathbb{Z}^{+}\to\mathbb{Z}^{+}$ counts the number of $k$-binomial equivalence classes of factors of length $n$ occurring in $\mathbf{w}$. In the case $k=+\infty$, it holds that $b_{\mathbf{w},+\infty}(n)=\rho_{\mathbf{w}}(n)$, while if $k=1$, $b_{\mathbf{w},1}(n)$, denoted by $\rho^{ab}_{\mathbf{w}}(n)$, corresponds to the usual abelian complexity of $\mathbf{w}$.

Abelian complexity is now a widely studied property of infinite words that has been examined for the first time by Coven and Hedlund in \cite{CH73}, where they have revealed that it could serve as an alternative way to characterize periodic words and Sturmian words. Coven and Hedlund showed that an infinite word $\mathbf{w}$ is periodic if and only if its abelian complexity satisfies $\rho^{ab}_{\mathbf{w}}(n)=1$ for all large enough $n$, and they proved that an aperiodic binary infinite word $\mathbf{s}$ is Sturmian if and only if $\rho^{ab}_{\mathbf{s}}(n)=2$ for every integer $n\geq 1$. The notion ``abelian complexity'' itself comes from the paper \cite{RSZ11} which initiated a general study of the abelian complexity of infinite words over finite alphabets. The  abelian complexity functions of some notable words have been determined, for example, the Thue–Morse word \cite{RSZ11}, the paperfolding word \cite{MR13}, the Rudin-Shapiro word \cite{LCWW17} and the generalized Thue-Morse words \cite{CW19}.

However nontrivial infinite words with a closed form of the $k$-binomial complexity $b_{\mathbf{w},k}(n)$ are very rare. There are only a few such examples to date.
\begin{itemize}
	\item Let $\mathbf{t}$ be the Thue-Morse word. Let $k$ be a positive integer. Lejeune, Leroy and Rigo \cite{LLR20} proved that for all $n\leq 2^{k}-1$,  $b_{\mathbf{t},k}(n)=\rho_{\mathbf{t}}(n)$ and  for all $n\geq 2^{k}$, 
	\[b_{\mathbf{t},k}(n)=\begin{cases}
		3\cdot 2^{k}-3,~\text{if}~n\equiv 0~(\mathrm{mod}~2^{k});\\
		3\cdot 2^{k}-4,~\text{otherwise.}\end{cases}\]
	\item For a Sturmian word $\mathbf{s}$,  Rigo and Salimov \cite{RS15} showed that for all $k\geq 2$, $$b_{\mathbf{s},k}(n)=\rho_{\mathbf{s}}(n)=n+1~(n\geq 1).$$ 
	\item Let $\varphi: \mathcal{A}^{*}\to\mathcal{A}^{*}$ be a morphism. If $\varphi(a)\sim_{ab}\varphi(b)$ for all $a,b\in\mathcal{A}$, then $\varphi$ is said to be \emph{Parikh-constant}. Rigo and Salimov \cite{RS15}  proved that there is a constant $C_{\mathbf{x},k}>0$ such that $b_{\mathbf{x},k}(n)\leq C_{\mathbf{x},k}$ for all $n\geq 1$, where $\mathbf{x}$ is a fixed point of a Parikh-constant morphism.
	\item For the Tribonacci word $\mathbf{T}$, Lejeune, Rigo and Rosenfeld
	\cite{LRR20} proved that for all $k\geq 2$, \[b_{\mathbf{T},k}(n)=\rho_{\mathbf{T}}(n)=2n+1~(n\geq 1).\]
\end{itemize}
For every $k\geq 2$, finding explicit value of the $k$-binomial complexity $b_{\mathbf{w},k}(n)$ for a given infinite word $\mathbf{w}$ is a difficult task, particularly in case of words defined over alphabets consisting of more than two letters. In \cite[Section 8] {LLR20}, Lejeune, Leroy and Rigo asked if it is possible to compute  the exact value of $b_{\mathbf{x},k}(n)$ for the fixed point $\mathbf{x}$  of any Parikh-constant morphism such as the generalized Thue-Morse word $\mathbf{t}_{m}$ with $m\geq 3$. Let $\sigma_{m}$ be a morphism over the alphabet $\Sigma_{m}:=\{0,1,\dots, m-1\}$ defined as $0\mapsto 01\cdots(m-1)$, $1\mapsto 12\cdots(m-1)0$, $\dots$, $m-1\mapsto (m-1)0\cdots(m-2)$. The \emph{generalized Thue-Morse word} 
\[\mathbf{t}_{m}:= t_{m}(0)t_{m}(1)t_{m}(2)\dots = 01\cdots\]
is the fixed point of the morphism $\sigma_{m}$ beginning with $0$, i.e., $\mathbf{t}_{m}=\sigma_{m}^{\infty}(0)$. 

The abelian complexity (or $1$-binomial complexity) $b_{\mathbf{t}_{m},1}(n)$ was given in \cite{CW19}.
\begin{theorem}[\cite{CW19}]\label{lem:abel}
	For all integer $n\geq m$, if $n\equiv r \pmod m,$ then
	\[b_{\mathbf{t}_{m},1}(n) = \rho_{\mathbf{t}_{m}}^{ab}(n) =
	\begin{cases} 
		\frac{1}{4}m(m^2-1)+1,  & \text{ if $m$ is odd and $r= 0$};  \\
		\frac{1}{4}m(m-1)^2+m,  & \text{ if $m$ is odd and $r \neq 0$};  \\
		\frac{1}{4}m^3+1,  & \text{ if $m$ is even and $r= 0$}; \\
		\frac{1}{4}m(m-1)^2+\frac{5}{4}m,  & \text{ if $m$ is even and $r\neq 0$ is even}; \\
		\frac{1}{4}m^2(m-2)+m,  & \text{ if $m$ is even and $r\neq 0$ is odd}. 
	\end{cases} \] 
\end{theorem}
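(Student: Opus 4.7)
The strategy exploits the fact that $\sigma_m$ is Parikh-constant: each image $\sigma_m(i)$ contains exactly one copy of every letter, so $\mathrm{Par}(\sigma_m(u)) = |u|\,\vec{1}$ where $\vec{1}=(1,\dots,1)\in\mathbb{Z}^m$. The plan is to show that every factor $w$ of $\mathbf{t}_m$ of length $n \ge m$ admits a decomposition $w = s\,\sigma_m(u)\,p$ in which $s$ is a (possibly empty) proper suffix of some $\sigma_m(a)$, $p$ is a (possibly empty) proper prefix of some $\sigma_m(b)$, $u$ is a factor of $\mathbf{t}_m$, and $\alpha + m|u| + \beta = n$ with $\alpha=|s|$, $\beta=|p|\in\{0,\dots,m-1\}$. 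Because $\sigma_m(i) = i(i+1)\cdots(i+m-1) \pmod m$, the letters of $s$ form the cyclic arc $A_s = \{a-\alpha,\dots,a-1\}\subset\mathbb{Z}/m\mathbb{Z}$ and those of $p$ form $A_p = \{b,\dots,b+\beta-1\}$, giving the clean identity
\[
\mathrm{Par}(w) = |u|\,\vec{1} + \chi_{A_s} + \chi_{A_p}.
\]

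Write $n=mq+r$ with $0\le r<m$. The length equation forces either $\alpha+\beta=r$ together with $|u|=q$, or $\alpha+\beta=r+m$ together with $|u|=q-1$; let $\mathcal{V}_1$ and $\mathcal{V}_2$ denote the corresponding sets of achievable Parikh vectors, so that $b_{\mathbf{t}_m,1}(n) = |\mathcal{V}_1 \cup \mathcal{V}_2|$. For each fixed pair $(\alpha,\beta)$, the vector $\chi_{A_s}+\chi_{A_p}$ has entries in $\{0,1,2\}$ summing to $\alpha+\beta$ and is determined by the overlap arc $A_s\cap A_p$ together with the symmetric difference. Since $A_s$ and $A_p$ each range over $m$ cyclic arcs of fixed length, the distinct sums can be enumerated explicitly in terms of $m,\alpha,\beta$, and aggregating over all valid $(\alpha,\beta)$ yields $|\mathcal{V}_1|$ and $|\mathcal{V}_2|$.

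The main obstacle is twofold. First, one must establish \emph{achievability}: since $a$, $u$, $b$ are linked by the requirement that $aub$ be a factor of $\mathbf{t}_m$, not every quadruple $(a,\alpha,b,\beta)$ realizes a factor. For $|u|\ge 1$ this is essentially unrestricted because every letter pair eventually occurs in $\mathbf{t}_m$ (by minimality of the associated subshift together with the identity $\mathbf{t}_m = \sigma_m(\mathbf{t}_m)$), but the corner cases $|u|\in\{0,1\}$ need separate verification. Second, and more delicate, the union $\mathcal{V}_1 \cup \mathcal{V}_2$ involves coincidences: a vector in $\mathcal{V}_1$ equals one in $\mathcal{V}_2$ exactly when the two arc-pair sums differ by $\vec{1}$, i.e.\ when one pair of arcs is cyclically complementary to the other. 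Enumerating these collisions is where the parity of $m$ intervenes (the existence of an arc of length $m/2$ requires $m$ even), as does the distinction among $r=0$, $r$ nonzero even, and $r$ odd; this is the combinatorial source of the five cases in the statement. The closed-form values then fall out of a case-by-case summation.
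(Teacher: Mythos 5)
You should first note that the paper does not prove Theorem \ref{lem:abel} at all: it is imported from \cite{CW19}, so there is no in-paper argument to compare against. Judged on its own terms, your reduction is correct and is the natural one for a Parikh-constant uniform morphism: every factor of length $n\ge m$ factors as $s\,\sigma_m(u)\,p$ with $s$ a proper suffix of some $\sigma_m(a)$ and $p$ a proper prefix of some $\sigma_m(b)$; the identity $\Psi(w)=|u|\mathbf{1}+\chi_{A_s}+\chi_{A_p}$ holds with $A_s,A_p$ cyclic arcs of lengths $\alpha,\beta$; and $b_{\mathbf{t}_m,1}(n)=\sharp(\mathcal{V}_1\cup\mathcal{V}_2)$ with the two regimes $\alpha+\beta=r$, $|u|=q$ and $\alpha+\beta=r+m$, $|u|=q-1$. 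This is exactly the kind of bookkeeping the present paper carries out in Lemmas \ref{0:a}, \ref{a:r:a} and \ref{a:b:n} for boundary-constrained refinements of the same count, and your description of the $\mathcal{V}_1$/$\mathcal{V}_2$ collisions (arc-pair sums differing by $\mathbf{1}$) is the right condition; a spot check at $m=3$, $n=3$ and $n=4$ reproduces the values $7$ and $6$ predicted by the statement.

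Two things keep this from being a proof. First, achievability is under-justified: what you need is not that ``every letter pair eventually occurs'' but that every \emph{ordered} pair $(a,b)\in\Sigma_m^2$ occurs as the first and last letter of a factor of \emph{every} length $\ge 2$ --- this is precisely Lemma \ref{bound:t} (Lemma 2 of \cite{CW19}) --- and it is this stronger, length-uniform statement that lets every pair of arcs $(A_s,A_p)$ of prescribed lengths be realized; your parenthetical appeal to minimality and $\sigma_m(\mathbf{t}_m)=\mathbf{t}_m$ does not by itself deliver the uniformity in the length. Second, and more substantially, the entire quantitative content of the theorem --- the enumeration of distinct sums $\chi_{A}+\chi_{B}$ over pairs of arcs of prescribed total length, the count of collisions between the two regimes, and the case split on the parities of $m$ and $r$ that produces the five closed forms with their $\tfrac14$-coefficients --- is deferred to ``a case-by-case summation'' that is never performed. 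That computation \emph{is} the theorem; without it the proposal is a correct reduction plus a plausible plan, not a proof.
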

In this paper, we fully characterize the $2$-binomial complexity $b_{\mathbf{t}_{m},2}(n)$ with $m\geq 3$. Firstly we give a sufficient and necessary condition of the $2$-binomial equivalence of two factors $u$ and $v$ of the word $\mathbf{t}_{m}$. Let $\mathcal{S}$ and $\mathcal{P}$ denote the set of all the possible suffixes and prefixes defined as
\begin{align*}
	\mathcal{S} & := \{w\in\Sigma_m^{*}\mid  w \text{ is a suffix of }\sigma_m(a) \text{ for some }a\in\Sigma_m \text{ and }|w|<m\},\\
	\mathcal{P} & := \{w\in\Sigma_m^{*}\mid  w \text{ is a prefix of }\sigma_m(a) \text{ for some }a\in\Sigma_m \text{ and }|w|<m\}.
\end{align*}
\begin{theorem}\label{main:result}
	Let $u=\alpha\sigma_{m}(u')\beta$ and $v=\alpha'\sigma_{m}(v')\beta'$ be two factors of the generalized Thue-Morse word $\mathbf{t}_{m}$, where $(\alpha,\beta), (\alpha',\beta')\in\mathcal{S}\times\mathcal{P}$ and  $\min\{|u'|,|v'|\}\geq3$. Then $u\sim_{2}v$ if and only if $\alpha=\alpha'$, $\beta=\beta'$ and $u'\sim_{1} v'$. 
\end{theorem}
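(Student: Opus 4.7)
The plan is to handle the two implications separately, with the bulk of the work in the necessity direction.

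For the $(\Leftarrow)$ direction, I would expand the binomial coefficients directly. Applying the concatenation identity
\[\binom{xyz}{ab}=\sum_{w_{1}w_{2}w_{3}=ab}\binom{x}{w_{1}}\binom{y}{w_{2}}\binom{z}{w_{3}}\]
to $u=\alpha\sigma_{m}(u')\beta$ and using Parikh-constancy of $\sigma_{m}$ (each $\sigma_{m}(c)$ contains every letter of $\Sigma_m$ exactly once, so $\binom{\sigma_{m}(u')}{d}=|u'|$ for every $d$), the expansion reduces to combinations of $\binom{\alpha}{\cdot}$, $\binom{\beta}{\cdot}$, $|\alpha|_a$, $|\beta|_b$, $|u'|$, and the morphic-image term, which unfolds as
\[\binom{\sigma_{m}(u')}{ab}=\binom{|u'|}{2}+\sum_{c\in\Sigma_{m}}|u'|_{c}\binom{\sigma_{m}(c)}{ab}\qquad(a\neq b),\]
depending only on $|u'|$ and the counts $|u'|_c$. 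Under the hypotheses $\alpha=\alpha'$, $\beta=\beta'$, and $u'\sim_{1}v'$ (which imply $|u'|=|v'|$ and $|u'|_c=|v'|_c$ for every $c$), the 1-binomial equivalence $u\sim_{1}v$ is immediate, and for every $|x|=2$ each of the six terms in $\binom{u}{x}$ matches the corresponding term in $\binom{v}{x}$, giving $u\sim_{2}v$.

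For the $(\Rightarrow)$ direction, assume $u\sim_{2}v$. The $1$-binomial equality combined with Parikh-constancy of $\sigma_m$ gives $|\alpha|_{c}+|\beta|_{c}-|\alpha'|_{c}-|\beta'|_{c}=|v'|-|u'|$ for every letter $c$. Since $\alpha,\beta,\alpha',\beta'$ are consecutive arcs of $\mathbb{Z}/m\mathbb{Z}$ of length strictly less than $m$, their letter-multiplicities lie in $\{0,1\}$, so each entry of the left-hand side is in $\{-2,-1,0,1,2\}$. The extreme values $\pm 2$ would force $\alpha$ and $\beta$ (or $\alpha'$ and $\beta'$) to each contain every letter of $\Sigma_m$, contradicting $|\alpha|,|\beta|<m$. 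Hence $|u'|-|v'|\in\{-1,0,1\}$. I would then rule out $|u'|-|v'|=\pm 1$ by pushing into the 2-binomial equations: the $a=b$ case $\binom{u}{aa}=\binom{v}{aa}$ contains the quadratic discrepancy $\binom{|u'|}{2}-\binom{|v'|}{2}$, and combining these $m$ equations with a subset of the $a\neq b$ equations yields a system that is incompatible with the length mismatch once $|u'|,|v'|\ge 3$, since the bounded contributions from $\alpha,\beta,\alpha',\beta'$ cannot absorb the linear-in-$|u'|$ discrepancy.

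Once $|u'|=|v'|=:n$, we obtain $|\alpha|+|\beta|=|\alpha'|+|\beta'|$ and $|\alpha|_c+|\beta|_c=|\alpha'|_c+|\beta'|_c$ for every $c$. To upgrade this to the individual equalities $|\alpha|_c=|\alpha'|_c$ and $|\beta|_c=|\beta'|_c$, I would exploit the asymmetry of $\alpha$ and $\beta$ in the 2-binomial expansion. Taking the differences $\binom{u}{ab}-\binom{u}{ba}$ and $\binom{v}{ab}-\binom{v}{ba}$ and using $\binom{\sigma_m(c)}{ab}+\binom{\sigma_m(c)}{ba}=1$ for $a\neq b$ cancels the symmetric contributions and leaves expressions involving $|\alpha|_a|\beta|_b-|\alpha|_b|\beta|_a$ and $n(|\alpha|_a-|\alpha|_b+|\beta|_b-|\beta|_a)$. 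Combining these equations for varying $(a,b)$ and using that elements of $\mathcal{S}$ and $\mathcal{P}$ are uniquely determined by their length together with their Parikh vector (being arcs of length $<m$ in $\mathbb{Z}/m\mathbb{Z}$), I would conclude $\alpha=\alpha'$ and $\beta=\beta'$. With these in hand, the residual 2-binomial equation becomes $\sum_{c}(|u'|_c-|v'|_c)\binom{\sigma_m(c)}{ab}=0$ for all $a\neq b$. Specializing $b$ to be the cyclic successor of $a$ modulo $m$, a direct computation shows $\binom{\sigma_m(c)}{ab}=1$ for all $c\neq b$ and $0$ at $c=b$, so combined with $\sum_c(|u'|_c-|v'|_c)=0$ this forces $|u'|_c=|v'|_c$ for every $c$, yielding $u'\sim_{1}v'$.

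The main obstacle is the middle step: ruling out $|u'|\neq|v'|$ and disentangling the contributions of $\alpha$ from those of $\beta$. Both require careful linear-algebraic manipulation of the $m^{2}$ equations $\binom{u}{ab}=\binom{v}{ab}$, exploiting the quadratic-versus-linear growth in $|u'|$ of different terms and the rigid arc structure of $\mathcal{S}$ and $\mathcal{P}$; the hypothesis $\min\{|u'|,|v'|\}\ge 3$ enters precisely to make these growth rates separable.
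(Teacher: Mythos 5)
Your $(\Leftarrow)$ direction and the initial observation that $\bigl||u'|-|v'|\bigr|\le 1$ are sound and agree with the paper's computations, which rest on the same identity
\[
\binom{\alpha\sigma_m(u')\beta}{cd}=\binom{\alpha\beta}{cd}+|u'|\bigl(|\alpha|_c+|\beta|_d\bigr)+\sum_{x\in(d,c]}|u'|_x+\binom{|u'|}{2}.
\]
The $(\Rightarrow)$ direction, however, has two genuine gaps. First, your plan for ruling out $|u'|=|v'|\pm 1$ does not work as described. The equations $\binom{u}{aa}=\binom{v}{aa}$ carry no information beyond abelian equivalence, since $\binom{u}{aa}=\binom{|u|_a}{2}$ depends only on the Parikh vector of $u$; there is no exploitable ``quadratic discrepancy'' there. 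And in the $a\ne b$ equations the quantity that must be controlled is not a bounded contribution from the boundary words but the difference $\sum_{x\in(d,c]}|u'|_x-\sum_{x\in(d,c]}|v'|_x$, which can itself be of order $|u'|$. The paper closes this case (Propositions \ref{a:b:empty}--\ref{a:b:a':b'}) by a case analysis on $\alpha,\beta,\alpha',\beta'$ that selects specific letter pairs for which the coefficient of $|u'|$ is strictly dominant, and it crucially uses that $u',v'$ are factors of the cube-free word $\mathbf{t}_m$ (so that $|u'|_x<|u'|$, hence $\sum_{x\in(d,c]}|u'|_x>0$ for the relevant arcs). Your proposal never invokes cube-freeness or indeed any property of $\mathbf{t}_m$ beyond Parikh-constancy, so the growth-rate separation you describe does not by itself close this case.

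Second, in the equal-length case your antisymmetrization $\binom{u}{ab}-\binom{u}{ba}$ does not cancel the dependence on $u'$: since $(b,a]$ and $(a,b]$ partition $\Sigma_m$, the difference still contains $2\sum_{x\in(b,a]}|u'|_x-|u'|$, which involves the unknown Parikh vector of $u'$. So you cannot isolate $\alpha,\beta$ before knowing $u'\sim_1 v'$, and your proposed order of deductions (first $\alpha=\alpha'$, $\beta=\beta'$, then $u'\sim_1 v'$) runs into a circularity. The paper proceeds in the opposite order (Proposition \ref{a:b:1}): specializing to the pairs $(c,c-1)$, for which $(c-1,c]=\{c\}$, the equation reduces to $|u'|\bigl(|\alpha|_c+|\beta|_{c-1}-|\alpha'|_c-|\beta'|_{c-1}\bigr)=|v'|_c-|u'|_c$, and cube-freeness forces $\bigl||v'|_c-|u'|_c\bigr|<|u'|$, whence $u'\sim_1 v'$; only then do the remaining equations yield $|\alpha|_c+|\beta|_d=|\alpha'|_c+|\beta'|_d$ for all pairs $(c,d)$, from which $\alpha=\alpha'$ and $\beta=\beta'$ follow via Lemma \ref{alpha:beta:equal} (after separately disposing of the case $\alpha\beta\nsim_2\alpha'\beta'$ in Proposition \ref{a:b:2}). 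Your concluding step, recovering $u'\sim_1 v'$ from the pairs $(a,a-1)$ once the boundary words agree, is correct, but the route to that point needs to be rebuilt along these lines.
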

We remark that there are infinite words which are fixed points of some Parikh-constant morphisms and do not satisfy the property of Theorem \ref{main:result}. For example, let $\mathbf{x}=\sigma^{\infty}(0)$ where $\sigma$ is the morphism over $\{0,1,2\}$ defined as $0\mapsto 012, 1\mapsto 210, 2\mapsto 120$. Let $u=\sigma(10122)21$ and $v=\sigma(22101)12$. Then $u$ and $v$ are two factors of $\mathbf{x}$ and $u\sim_{2}v$ with $\alpha=\alpha'=\varepsilon$, $\beta=12$ and $\beta'=21$.  It is natural to ask that what kind of words share a property similar to Theorem \ref{main:result}.

Secondly, we obtain the exact value of the $2$-binomial complexity $b_{\mathbf{t}_{m},2}(n)$ for every $n\geq m^{2}$.

\begin{theorem}\label{main:result2}
	
	For every $n\geq m^{2}$ with $m\geq 3$, we have
	\[b_{\mathbf{t}_{m},2}(n)= \begin{cases}
		b_{\mathbf{t}_{m},1}(n/m) + m(m-1)(m(m-1)+1), &\text{ if } n\equiv 0 \pmod m; \\ 
		m^{4}-2m^{3}+2m^{2}, &\text{otherwise,}
	\end{cases}
	\]
	where the abelian complexity $b_{\mathbf{t}_{m},1}(\cdot)$ is given in Theorem \ref{lem:abel}.
\end{theorem}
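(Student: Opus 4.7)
The plan is to use Theorem \ref{main:result} to convert the count of 2-binomial classes into a count of triples, then enumerate those triples carefully. By recognizability of $\sigma_m$, every factor $u$ of $\mathbf{t}_m$ of length $n\geq m^2$ admits a unique desubstitution $u=\alpha\sigma_m(u')\beta$ with $(\alpha,\beta)\in\mathcal{S}\times\mathcal{P}$ and $u'\in\mathcal{F}_{\mathbf{t}_m}$. From $|\alpha|+m|u'|+|\beta|=n$ and $|\alpha|,|\beta|\leq m-1$ one obtains $|u'|\geq 3$ for all such pairs, except a small number of configurations with $n$ close to $m^2$ when $m=3$, which can be handled by direct inspection. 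Applying Theorem \ref{main:result} then identifies the 2-binomial class of $u$ with the triple $(\alpha,\beta,[u']_{\sim_1})$, so
\[
b_{\mathbf{t}_m,2}(n)=\#\bigl\{(\alpha,\beta,[u']_{\sim_1}):\alpha\sigma_m(u')\beta\in\mathcal{F}_{\mathbf{t}_m}(n)\bigr\}.
\]
Writing $r=n\bmod m$, the length equation forces $|\alpha|+|\beta|\in\{r,m+r\}$, so the enumeration splits cleanly into the case $n\equiv 0\pmod m$ (the pair $(\varepsilon,\varepsilon)$ together with the $m^2(m-1)$ pairs satisfying $|\alpha|,|\beta|\geq 1$ and $|\alpha|+|\beta|=m$) and the case $n\not\equiv 0\pmod m$ (pairs summing to $r$, with one side possibly empty, together with pairs summing to $m+r$).

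The technically most demanding step is, for each valid $(\alpha,\beta)$, to determine
\[
N(\alpha,\beta):=\#\bigl\{[u']_{\sim_1}:\alpha\sigma_m(u')\beta\in\mathcal{F}_{\mathbf{t}_m}\bigr\}.
\]
A nonempty $\alpha$ determines a parent letter $a_0\in\Sigma_m$ such that $\alpha$ is a suffix of $\sigma_m(a_0)$, and similarly a nonempty $\beta$ determines a letter $a_k$. Therefore $N(\alpha,\beta)$ counts the abelian classes of length-$\ell$ factors $u'$ (with $\ell=(n-|\alpha|-|\beta|)/m$) for which $a_0 u' a_k$ occurs in $\mathbf{t}_m$, where $a_0$ or $a_k$ is unconstrained when the corresponding side is empty. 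To perform this count I would apply a further desubstitution to the extended factor $a_0 u' a_k$ itself and exploit the Parikh-constant property of $\sigma_m$ (the Parikh vector of $\sigma_m(v)$ depends only on $|v|$), thereby relating realizable Parikh vectors of $u'$ with prescribed end-letters to Parikh vectors of shorter factors of $\mathbf{t}_m$, which are governed by Theorem \ref{lem:abel}.

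Summing the $N(\alpha,\beta)$ over all valid pairs should recover the two closed forms. When $n\equiv 0\pmod m$, the pair $(\varepsilon,\varepsilon)$ contributes exactly $b_{\mathbf{t}_m,1}(n/m)$, since $\sigma_m(u')\in\mathcal{F}_{\mathbf{t}_m}$ iff $u'\in\mathcal{F}_{\mathbf{t}_m}$, while the remaining $m^2(m-1)$ pairs should sum to the extra term $m(m-1)(m(m-1)+1)$. When $n\not\equiv 0\pmod m$, both families of pairs should combine into the single value $m^4-2m^3+2m^2$, notably independent of $r$. The principal obstacle I anticipate is the computation of $N(\alpha,\beta)$ itself: showing that $\sum_{(\alpha,\beta)} N(\alpha,\beta)$ collapses to these particular formulas requires a detailed tracking of which Parikh vectors of length-$\ell$ factors of $\mathbf{t}_m$ extend on the left by $a_0$ and on the right by $a_k$, a refinement of Theorem \ref{lem:abel} that is not directly read off from it and will depend on the residues of $|\alpha|,|\beta|,a_0,a_k$ modulo $m$ together with constraints inherited from the substitution $\sigma_m$.
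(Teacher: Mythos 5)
Your reduction is exactly the paper's: decompose each factor of length $n$ as $\alpha\sigma_m(u')\beta$ with $(\alpha,\beta)\in\mathcal{S}\times\mathcal{P}$, invoke Theorem \ref{main:result} to identify the $\sim_2$-class with the triple $(\alpha,\beta,[u']_{\sim_1})$, and sum the number of admissible abelian classes of $u'$ over the valid pairs $(\alpha,\beta)$. Your observation that the pair $(\varepsilon,\varepsilon)$ contributes $b_{\mathbf{t}_m,1}(n/m)$ and that the remaining pairs split by $|\alpha|+|\beta|\in\{r,m+r\}$ also matches the paper's partition into $F_1,F_2$ (for $n\equiv 0$) and $S_1,\dots,S_4$ (otherwise).

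However, there is a genuine gap: you stop precisely at the step that carries all the quantitative content. Your quantity $N(\alpha,\beta)$ is, after translating $\alpha$ and $\beta$ into their parent letters, the number of Parikh vectors of factors of $\mathbf{t}_m$ of a given length with a prescribed first letter, last letter, or both. You correctly identify that this is a refinement of Theorem \ref{lem:abel} that cannot be read off from it, but you only ``anticipate'' the computation rather than perform it. The paper does this work in three lemmas: the count with one prescribed end-letter equals $1+\tfrac{m(m-1)}{2}$ for every residue of the length (Lemmas \ref{0:a} and \ref{a:r:a}), and the count with both end-letters $a,b$ prescribed equals $1$ if the length is $\equiv b-a+1\pmod m$ and $m$ otherwise (Lemma \ref{a:b:n}); these are proved by explicitly parametrizing the desubstitutions $a|_i\,\sigma_m(v)\,b|_j$ and tracking which Parikh vectors coincide, using that $\partial\mathcal{F}_{\mathbf{t}_m}(n)=\Sigma_m^2$ to guarantee all configurations occur. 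Without these values the final sums cannot be evaluated, and the claim that they ``should'' collapse to $m(m-1)(m(m-1)+1)$ and to $m^4-2m^3+2m^2$ (independently of $r$) is unsubstantiated --- note for instance that the answer genuinely depends on a cancellation between the $(r-1)$-indexed and $(m-r-1)$-indexed families, which one only sees after Lemma \ref{a:b:n} is in hand. A secondary, smaller point: your worry about $|u'|<3$ when $m=3$ and $n$ is near $m^2$ is legitimate (the hypothesis $\min\{|u'|,|v'|\}\geq 3$ of Theorem \ref{main:result} can fail there), and ``direct inspection'' is an acceptable patch, but you should actually state which $(m,n)$ pairs need it.
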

It follows from Theorem \ref{lem:abel} and Theorem \ref{main:result2} that the $2$-binomial complexity $b_{\mathbf{t}_{m},2}(n)$ is ultimately periodic.
\begin{corollary}
	The $2$-binomial complexity $b_{\mathbf{t}_{m},2}(n)$ of the generalized Thue-Morse word $\mathbf{t}_{m}$ is ultimately periodic with period $m^{2}$.
\end{corollary}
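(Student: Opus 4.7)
The plan is to derive the corollary as a direct consequence of Theorem \ref{main:result2} combined with Theorem \ref{lem:abel}. I would first split the argument according to the two cases in Theorem \ref{main:result2}, based on the residue of $n$ modulo $m$, and show that in each case the value of $b_{\mathbf{t}_m,2}(n)$ depends only on the residue of $n$ modulo $m^2$ for $n \geq m^2$.

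First, I would handle the simpler case $n \not\equiv 0 \pmod{m}$: here Theorem \ref{main:result2} tells us that $b_{\mathbf{t}_m,2}(n)$ equals the constant $m^4 - 2m^3 + 2m^2$, which trivially takes the same value for all $n$ in any fixed residue class modulo $m^2$ (indeed for all such $n$).

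Next, I would handle the case $n \equiv 0 \pmod{m}$. Writing $n = m k$ with $k \geq m$ (since $n \geq m^2$), Theorem \ref{main:result2} gives
\[
b_{\mathbf{t}_m,2}(n) = b_{\mathbf{t}_m,1}(k) + m(m-1)\bigl(m(m-1)+1\bigr).
\]
By Theorem \ref{lem:abel}, the value $b_{\mathbf{t}_m,1}(k)$ depends only on the residue of $k$ modulo $m$ whenever $k \geq m$. Since $k = n/m$, the residue $k \bmod m$ is determined by $n \bmod m^2$ (among multiples of $m$). Hence $b_{\mathbf{t}_m,2}(n)$ again depends only on $n \bmod m^2$.

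Combining the two cases, for every $n \geq m^2$ the value $b_{\mathbf{t}_m,2}(n)$ is a function of $n \bmod m^2$, which is precisely the statement that the sequence is ultimately periodic with period $m^2$. There is no serious obstacle here; the only verification needed is that the two cases are compatible, which holds because both depend only on $n \bmod m^2$, and to observe that $n \equiv 0 \pmod{m}$ is itself a condition on $n \bmod m^2$, so the piecewise definition is well-posed as a function of $n \bmod m^2$.
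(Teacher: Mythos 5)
Your proposal is correct and matches the paper's own (essentially one-line) justification: the corollary is read off directly from Theorem \ref{main:result2} together with Theorem \ref{lem:abel}, exactly as you do, by observing that for $n\geq m^{2}$ the value $b_{\mathbf{t}_{m},2}(n)$ depends only on $n \bmod m^{2}$. Your write-up is in fact more detailed than the paper's, which simply asserts the implication.
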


As the abelian complexity (or $1$-binomial complexity) $b_{\mathbf{t}_{m},1}(n)$ and the $2$-binomial complexity $b_{\mathbf{t}_{m},2}(n)$ of the word $\mathbf{t}_{m}$ are ultimately periodic with period $m$ and $m^{2}$ respectively,  our numerical result suggests the following conjecture.

\begin{conjecture}
For every $k\geq 3$, the $k$-binomial complexity $b_{\mathbf{t}_{m},k}(n)$ of the generalized Thue-Morse word is ultimately periodic with period $m^{k}$. 
\end{conjecture}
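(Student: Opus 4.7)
The plan is to proceed by induction on $k$, with the inductive engine being a generalization of Theorem \ref{main:result} that reduces $k$-binomial equivalence of factors to $(k-1)$-binomial equivalence of their ``desubstituted'' cores. Concretely, I would attempt to prove the following statement: for every $k\geq 2$, there exists $N_{k}$ such that whenever $u=\alpha\sigma_{m}(u')\beta$ and $v=\alpha'\sigma_{m}(v')\beta'$ are two factors of $\mathbf{t}_{m}$ with $(\alpha,\beta),(\alpha',\beta')\in\mathcal{S}\times\mathcal{P}$ and $\min\{|u'|,|v'|\}\geq N_{k}$, one has $u\sim_{k}v$ if and only if $\alpha=\alpha'$, $\beta=\beta'$ and $u'\sim_{k-1}v'$. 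Theorem \ref{main:result} is the case $k=2$, and the desired conjecture would follow by iterating this reduction.

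The crux is a desubstitution identity: for any $x\in\Sigma_{m}^{*}$ with $|x|=k$, one expands
\[
\binom{\sigma_{m}(w)}{x}=\sum_{|y|\leq k-1} c_{x,y}\,\binom{w}{y}+d_{x}\,|w|+e_{x},
\]
where the coefficients $c_{x,y}, d_{x}, e_{x}$ are combinatorial constants depending on $x$ and on the Parikh-constant morphism $\sigma_{m}$ but not on $w$. Such a formula must exist because $\sigma_{m}$ has constant Parikh image, so splitting $x$ across the $m$-blocks $\sigma_{m}(w(0)),\sigma_{m}(w(1)),\ldots$ produces contributions whose multiplicities are themselves binomial coefficients of $w$ of strictly smaller length. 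I would derive it by a careful distribution argument, keeping track of how the $k$ positions of $x$ are allocated to the blocks, and then absorb the boundary contributions of $\alpha$ and $\beta$ into correction terms depending only on $\alpha,\beta,x$. The sufficiency direction in the generalized equivalence then reduces to matching these $(k-1)$-length binomial statistics of $u'$ and $v'$; the necessity direction requires constructing, for each fixed $\alpha,\beta$, enough test words $x$ of length $k$ to recover every relevant $\binom{u'}{y}$ for $|y|\leq k-1$. The main obstacle is exactly this necessity direction: the authors exhibit a three-letter Parikh-constant morphism for which the analogue of Theorem \ref{main:result} already fails at $k=2$, so the argument must use finer properties of $\sigma_{m}$ (e.g., a cyclic-group structure of letters modulo $m$) to guarantee that enough independent linear equations in the unknowns $\binom{u'}{y}-\binom{v'}{y}$ can be extracted.

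Once the generalized characterization is established, the induction closes cleanly. For the base case, Theorem \ref{lem:abel} yields ultimate periodicity of $b_{\mathbf{t}_{m},1}$ with period $m$, and the case $k=2$ is Theorem \ref{main:result2}. For the inductive step, assume $b_{\mathbf{t}_{m},k-1}$ is ultimately periodic with period $m^{k-1}$. For $n\geq m^{k}$, each factor of length $n$ admits a decomposition $\alpha\sigma_{m}(u')\beta$ with $(\alpha,\beta)\in\mathcal{S}\times\mathcal{P}$, and the length $|u'|$ depends on $n$ only through the relation $|u'|m+|\alpha|+|\beta|=n$. The number of $k$-binomial classes of factors of length $n$ then decomposes as a finite sum, indexed by admissible pairs $(\alpha,\beta)$, of the numbers of $(k-1)$-binomial classes of factors $u'$ of the corresponding length. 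Each summand is, by induction, periodic in $n$ with period $m\cdot m^{k-1}=m^{k}$, and the total is periodic with the same period. A final verification step is to ensure that the threshold $N_{k}$ from the generalized Theorem \ref{main:result} is compatible with $n\geq m^{k}$; one expects $N_{k}=O(m^{k-1})$, which suffices.
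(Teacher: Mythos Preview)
The statement you are addressing is a \emph{conjecture}; the paper offers no proof, only the numerical evidence and the resolved case $k=2$. So there is no ``paper's own proof'' to compare against, and your proposal should be read as a research plan rather than a proof.

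As a plan it is the natural one and mirrors exactly how the paper handles $k=2$: reduce $\sim_{k}$ on factors of $\mathbf{t}_{m}$ to $\sim_{k-1}$ on desubstituted cores via a generalized Theorem~\ref{main:result}, then deduce periodicity by induction. You correctly identify the crux and the genuine gap: the necessity direction of the generalized characterization. The desubstitution identity you describe does exist for any Parikh-constant morphism, but the paper's own counterexample (the morphism $0\mapsto 012$, $1\mapsto 210$, $2\mapsto 120$) shows that necessity can already fail at $k=2$ for such morphisms, so the argument must exploit the specific cyclic structure of $\sigma_{m}$. For $k=2$ the paper accomplishes this through an extensive case analysis (Propositions~\ref{0:0}--\ref{a:b:a':b'}); you give no indication of how to carry out the analogous analysis for general $k$, and no a priori reason to expect the relevant linear system in the unknowns $\binom{u'}{y}-\binom{v'}{y}$, $|y|\leq k-1$, to be solvable from the available $\binom{u}{x}-\binom{v}{x}$, $|x|\leq k$.

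There is a second, smaller gap in your periodicity deduction. Even granting the generalized characterization, the summand attached to a fixed pair $(\alpha,\beta)$ is not $b_{\mathbf{t}_{m},k-1}(|u'|)$ but the number of $(k-1)$-binomial classes among those $u'$ for which $\alpha\sigma_{m}(u')\beta$ is actually a factor of $\mathbf{t}_{m}$; this imposes constraints on the first and last letters of $u'$ (compare the use of Lemma~\ref{a:b:n} in the proofs of Theorems~\ref{2:n:0} and~\ref{2:n:r}). To make the induction close you would need a strengthened inductive hypothesis asserting ultimate $m^{k-1}$-periodicity of these boundary-constrained $(k-1)$-binomial counts, not merely of $b_{\mathbf{t}_{m},k-1}$ itself.
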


This paper is organized as follows. In Section $2$, we state some basic definitions and notations. In Section $3$, we prove Theorem \ref{main:result}. In the last section, we compute the exact value of $b_{\mathbf{t}_{m},2}(n)$ for every $n\geq m^{2}$. 
\section{Preliminaries}
\subsection{Finite and infinite words}
An \emph{alphabet} $\mathcal{A}$ is a finite and non-empty set whose elements are called \emph{letters}. Any concatenation of letters from $\mathcal{A}$ is called a word. The concatenation of two words ${u} =u(0)u(1) \cdots u(m)$ and ${v} = v(0)v(1) \cdots v(n)$ is the word ${uv} = u(0)u(1) \cdots u(m)v(0)v(1) \cdots v(n)$. The set of all finite words over $\mathcal{A}$ including the \emph{empty word} $\varepsilon $ is denoted by $\mathcal{A}^*$.  An infinite sequence of letters from $\mathcal{A}$ is called an infinite word and the set of all infinite words over $\mathcal{A}$ is denoted by $\mathcal{A}^{\mathbb{N}}$. For $\omega = \omega(0)\omega(1)\cdots \omega(n-1)\in\mathcal{A}^{*}$, we denote its length by $|\omega|=n$. By convention we set $|\varepsilon|=0$. 

A finite word $w$ is called a \emph{factor} of a finite (or an infinite) word $u$, denoted by $w\prec u$, if there exist a finite word $p$ and a finite (or an infinite) word $s$ such that $u = pws$. We say that the word $w$ is a \emph{prefix} of $u$, denoted by $w\vartriangleleft u$, if $p = \varepsilon$, and a \emph{suffix} of $u$, denoted by $w\vartriangleright u$, if $s = \varepsilon$. Let $\mathbf{w}=w(0)w(1)w(2)\cdots\in\mathcal{A}^{\mathbb{N}}$ be an infinite word. Recall that for every positive integer $n$, the set of all factors of $\mathbf{w}$ of length $n$ is defined as
\[\mathcal{F}_{\mathbf{w}}(n):=\{w(i)w(i+1)\cdots w(i+n-1)\mid i\geq 0\}.\] 
For convenience, we set $\mathcal{F}_{\mathbf{w}}(0)=\{\varepsilon\}$.
Given a finite word $u=u(0)u(1)\cdots u(n-1)$ with $n\geq 2$, we denote the \emph{$1$-length boundary word} consisting of the first and last letter of $u$ by $\partial{u}$, i.e., $\partial{u}=u(0)u(n-1)$. Define $\partial{\mathcal{F}_{\mathbf{w}}}(n) := \{\partial{u}\mid u\in\mathcal{F}_{\mathbf{w}}(n)\}$. For an integer $k\geq 2$ and a finite word $u=u(0)u(1)\cdots u(n)\in\mathcal{A}^{*}$ with $\mathcal{A}\subset \mathbb{N}$, let \[u\equiv u^{\prime}(0)u^{\prime}(1)\cdots u^{\prime}(n-1)\pmod k\] where $u^{\prime}(i)\in\Sigma_{k}$ and $u^{\prime}(i)\equiv u(i)\pmod{k}$ for all $i$. 

\subsection{Binomial coefficients, $k$-binomial equivalence and $k$-binomial complexity}
Now we introduce the binomial coefficients of words, the binomial equivalence of words and the binomial complexity of infinite words. Moreover, we list some properties of binomial coefficients of words. For more details, one can refer to \cite{RS15}.
\begin{definition}$(\emph{Binomial coefficient})$\label{binomial coefficient}
	Let $\mathcal{A}$ be a non-empty finite set and $u=u(0)u(1)\cdots u(n-1)\in\mathcal{A}^n$. Let $s: \mathbb{N}\rightarrow \mathbb{N}$ be an increasing map such that $s(\ell-1)<n$ for all $1\leq \ell\leq n$. Then for $\ell=1,2,\dots,n$, the word $u(s(0))\cdots u(s(\ell-1))$ is a \emph{scattered subword} of length $\ell$ of $u$. The binomial coefficient $\binom{u}{v}$ of two finite words $u$ and $v$ is defined to be the number of times that $v$ occurs as a scattered subword of $u$. In detail,
	\[\binom{u}{v}=\sharp\{(i_{1},i_{2},\dots,i_{|v|})\mid 0\leq i_{1}<i_{2}<\cdots<i_{|v|}\leq n-1, u(i_{1})u(i_{2})\cdots u(i_{|v|})=v\}.\]
\end{definition}
For convenience, let $\binom{u}{v}=0$ if $|v|>|u|$ and $\binom{u}{\varepsilon}=1$.
For example, let $u=u(0)u(1)\cdots u(5)=101000$ and $v=110$. Then $u(0)u(2)u(3)=u(0)u(2)u(4)=u(0)u(2)u(5)=v$.
Hence $\binom{u}{v}=3.$
\begin{definition}$(\emph{$k$-binomial equivalence})$
	Let $k$ be a positive integer and let $\mathcal{A}^{\leq k}$ denote the set of words of length at most $k$ over the alphabet $\mathcal{A}$. We say that $u,v\in\mathcal{A}^{*}$ are \emph{$k$-binomially equivalent} if for every $x\in\mathcal{A}^{\leq k}$,
	\[\binom{u}{x}=\binom{v}{x}.\] 
	We then write $u\sim_{k} v$ if $u$ and $v$ are $k$-binomially equivalent. 
\end{definition}
Indeed, since $\binom{u}{a}=|u|_{a}$ for all $a\in\mathcal{A}$, it is clear that $u\sim_{1} v$ if and only if $u$ and $v$ are abelian equivalent. Note that, for all $k\geq 2$, if $u\sim_{k} v$, then $u\sim_{\ell} v$ for every $1\leq \ell<k$.

There is an equivalent definition of the $k$-binominal equivalence using the extended Parikh vector. Let $u\in\Sigma_{m}^{*}$ and $1\leq \ell\leq k$, there are exact $m^{\ell}$ words of length $\ell$ which can be enumerated lexicographically: $v_{\ell,1},\dots, v_{\ell,m^{\ell}}$. The \emph{extended Parikh vector} of $u$, denoted by $\Psi_{k}(u)$, is 
\[\Psi_{k}(u):=\left(\binom{u}{v_{1,1}},\dots,\binom{u}{v_{1,m}},\binom{u}{v_{2,1}},\dots,\binom{u}{v_{2,m^{2}}},\dots,\binom{u}{v_{k,1}},\dots,\binom{u}{v_{k,m^{k}}}\right).\]
Then $u\sim_{k} v$ if and only if $\Psi_{k}(u)=\Psi_{k}(v)$.
When $k=1$, $\Psi_{1}(u)$ coincides with the Parikh vector of $u$. For convenience, we write $\Psi(u)=\Psi_{1}(u)$.

\begin{example}
	Let $u=010001$ and $v=001010$. Then
	\[\binom{u}{0}=4,\binom{u}{1}=2,\binom{u}{00}=6,
	\binom{u}{01}=5,
	\binom{u}{10}=3,
	\binom{u}{11}=1.\]
	The same equalities hold for the word $v$. Therefore, $\Psi_{2}(u)=\Psi_{2}(v)=(4,2,6,5,3,1)$ and $u\sim_{2} v$.
\end{example}

\begin{definition}$(\emph{$k$-binomial complexity})$ Let $\mathbf{w}$ be an infinite word and let $k$ be a positive integer. The $k$-binomial complexity function $b_{\mathbf{w},k}$ of $\mathbf{w}$ is 
	\[b_{\mathbf{w},k}(n):= \sharp(\mathcal{F}_{\mathbf{w}}(n)/\sim_{k})=\sharp\big\{\Psi_{k}(u)\mid u\in\mathcal{F}_{\mathbf{w}}(n)\}.\]
\end{definition}
In the following, we collect some facts about binomial coefficients and binomial equivalence of words.

\begin{lemma}[\cite{RS15}]\label{prop:binomial}
	\begin{enumerate}[(1)]
		\item Let $u,v$ be two words and let $a,a'$ be two letters. Then 
		\[\binom{au}{a'v}=\binom{u}{a'v}+\delta_{a,a'}\binom{u}{v}~\text{and}~\binom{ua}{va'}=\binom{u}{va'}+\delta_{a,a'}\binom{u}{v},\]
		where $\delta_{a,a'}=1$ if $a=a'$ and $0$ otherwise.
		\item Let $s,w,t$ be three finite words over $\mathcal{A}$ satisfying $|t|\leq |sw|$. Then
		\[\binom{sw}{t}=\sum_{uv=t\text{ with }u,v\in\mathcal{A}^{*}}\binom{s}{u}\binom{w}{v}.\]
		\item Let $u,v,w$ be three words and let $k\geq 1$ be an integer. Then
		\[vu\sim_{k}wu \iff v\sim_{k}w\iff uv\sim_{k}uw.\]
		\item 	Let $k\geq 2$ be an integer and let $u,v,u',v'$ be four words such that $u\sim_{k-1} u'$ but $u\nsim_{k}u'$ and $v\sim_{k}v'$, then $uv\nsim_{k} u'v'$.
	\end{enumerate}
\end{lemma}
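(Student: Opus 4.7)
The plan is to prove the four items in sequence, using (1) and (2) as primitive calculations and then deriving (3) and (4) from (2) by algebraic manipulation of the convolution identity.

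For (1), I would argue directly from the definition of $\binom{au}{a'v}$ as a count of strictly increasing index sequences in $au$. Such a sequence either selects the initial position $0$ (whose letter is $a$), in which case the match requires $a=a'$ and the remaining indices embed $v$ into $u$, contributing $\delta_{a,a'}\binom{u}{v}$; or it avoids position $0$, in which case it is an embedding of $a'v$ entirely into $u$, contributing $\binom{u}{a'v}$. The second identity is proved symmetrically by reading indices from the right end.

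For (2), I would fix an embedding of $t$ into $sw$ and split it at the boundary between $s$ and $w$: the indices falling inside $s$ form an embedding of some prefix $u$ of $t$ into $s$, and the remaining indices form an embedding of the complementary suffix $v$ of $t$ into $w$. Conversely, any pair of embeddings, one of a prefix $u$ into $s$ and one of the matching suffix $v$ into $w$, composes to a legal embedding of $t=uv$ into $sw$. Since this correspondence is a bijection, summing over all factorizations $t=uv$ in $\mathcal{A}^{*}$ yields the stated convolution formula.

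For (3), the forward implication $v\sim_{k}w\Rightarrow uv\sim_{k}uw$ follows by expanding each side by (2) for an arbitrary test word $x$ with $|x|\le k$: every factorization $x=pq$ has $|q|\le k$, so $\binom{v}{q}=\binom{w}{q}$ by hypothesis, and the two sums agree termwise. For the converse $uv\sim_{k}uw\Rightarrow v\sim_{k}w$, I would prove $\binom{v}{y}=\binom{w}{y}$ by induction on $|y|\le k$. The base $|y|=0$ is trivial. For the inductive step, isolate in the expansion of $\binom{uv}{y}$ (respectively $\binom{uw}{y}$) the term corresponding to $p=\varepsilon$, $q=y$, which equals $\binom{v}{y}$ (respectively $\binom{w}{y}$); every other term involves $\binom{v}{q'}$ with $|q'|<|y|$, which equals $\binom{w}{q'}$ by the inductive hypothesis, so equality of the full sums forces $\binom{v}{y}=\binom{w}{y}$. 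The equivalence $vu\sim_{k}wu\iff v\sim_{k}w$ is handled by the mirror argument.

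For (4), assume $u\sim_{k-1}u'$ but $u\nsim_{k}u'$, and pick a witness $x$ with $|x|=k$ and $\binom{u}{x}\neq\binom{u'}{x}$. Expanding both $\binom{uv}{x}$ and $\binom{u'v'}{x}$ by (2), every factorization $x=pq$ with $|q|\ge 1$ contributes matching terms, since $|p|<k$ gives $\binom{u}{p}=\binom{u'}{p}$ and $|q|\le k$ gives $\binom{v}{q}=\binom{v'}{q}$. The only factorization that can differ is $p=x$, $q=\varepsilon$, whose contribution is $\binom{u}{x}$ on one side and $\binom{u'}{x}$ on the other. Hence $\binom{uv}{x}\neq\binom{u'v'}{x}$, so $uv\nsim_{k}u'v'$. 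The one slightly non-trivial step in the whole argument is the reverse direction of (3), where the inductive peeling of the $p=\varepsilon$ term from the convolution is the crucial idea; every other assertion is an immediate consequence of (1) or (2).
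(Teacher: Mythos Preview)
Your argument is correct in all four parts; the counting in (1) and the bijection in (2) are the standard proofs, and your use of (2) together with the inductive isolation of the $p=\varepsilon$ term in (3), and of the $q=\varepsilon$ term in (4), is exactly how these cancellation facts are usually established.

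Note, however, that the paper does not give its own proof of this lemma: it is stated with a citation to Rigo and Salimov \cite{RS15} and used as a black box. So there is no in-paper proof to compare against; what you have written is a self-contained justification of a result the authors chose to quote from the literature.
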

\section{$2$-binomial equivalence of factors of the generalized Thue-Morse word}
Fix an integer $m\geq 3$. Recall that $\sigma_{m}$ is the morphism over the alphabet $\Sigma_m=\{0,1,\cdots m-1\}$ defined as $0\mapsto 01\cdots(m-1), 1\mapsto 12\cdots(m-1)0,\cdots, m-1\mapsto (m-1)0\cdots(m-2)$. The infinite word $\mathbf{t}_{m} :=\sigma_{m}^{\infty}(0)$ is called the generalized Thue-Morse word. In this section, we investigate the $2$-binomial equivalence between factors of the generalized Thue-Morse word.

\subsection{Properties of factors of the word $\mathbf{t}_{m}$.}
We collect some basic properties for factors of  $\mathbf{t}_{m}$.

\begin{lemma}\label{alpha:beta:equal}
	Let $\alpha\prec \sigma_{m}(a)$ and $\beta\prec \sigma_{m}(b)$ for some $a,b\in\Sigma_{m}$ with $\max\{|\alpha|,|\beta|\}<m$. Then $\alpha\sim_{1}\beta$ if and only if $\alpha=\beta$. 
\end{lemma}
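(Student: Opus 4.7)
The backward implication is immediate, so the plan is to show that $\alpha \sim_{1} \beta$ forces $\alpha = \beta$. The key structural observation is that $\sigma_{m}(a) = a\,(a{+}1)\,(a{+}2)\cdots(a{+}m{-}1) \pmod{m}$ lists every letter of $\Sigma_{m}$ exactly once in arithmetic progression. Consequently, every factor of $\sigma_{m}(a)$ of length $k < m$ has pairwise distinct letters and takes the form
\[
c\,(c{+}1)\,(c{+}2)\cdots(c{+}k{-}1) \pmod{m}
\]
for a unique starting letter $c \in \Sigma_{m}$. In particular, such a factor is completely determined by its starting letter and its length.

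First I would exploit the abelian equivalence $\alpha \sim_{1} \beta$: it gives $|\alpha| = |\beta| =: k$ and equality of letter counts, which (since each letter appears at most once in either word) reduces to equality of the underlying sets of letters of $\alpha$ and $\beta$. Writing $\alpha = c\,(c{+}1)\cdots(c{+}k{-}1)$ and $\beta = c'\,(c'{+}1)\cdots(c'{+}k{-}1) \pmod{m}$, the task then reduces to verifying that $c = c'$.

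The only substantive step — and the place I expect the sole obstacle to lie — is the claim that, in the cyclic group $\mathbb{Z}/m\mathbb{Z}$, two arcs of the same length $k$ with $k < m$ sharing the same underlying set must share the same starting point. This is precisely where the hypothesis $\max\{|\alpha|, |\beta|\} < m$ comes in: because the common set $\{c, c{+}1, \ldots, c{+}k{-}1\}$ is a proper subset of $\mathbb{Z}/m\mathbb{Z}$, its starting point can be recovered intrinsically as the unique element $x$ of the set for which $x - 1 \pmod{m}$ does not belong to the set. Hence $c = c'$ and $\alpha = \beta$, as desired. The strict inequality $k < m$ is essential rather than decorative: when $k = m$ the same set admits $m$ distinct cyclic shifts, so the recovery of the starting letter breaks down.
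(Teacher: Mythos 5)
Your proposal is correct and follows essentially the same route as the paper: both write $\alpha$ and $\beta$ as arithmetic-progression arcs $i(i{+}1)\cdots$ and $j(j{+}1)\cdots \pmod m$ and argue that distinct starting points with length $<m$ force distinct letter sets, hence $\alpha\nsim_1\beta$. Your explicit recovery of the starting letter as the unique $x$ in the set with $x-1\pmod m$ outside the set is just a more detailed justification of the step the paper states tersely.
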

\begin{proof}
	Suppose $\alpha\sim_1\beta$. By the definition of $\sigma_m$, we can assume that $\alpha=i(i+1)\cdots(i+|\alpha|-1) \pmod{m}$ and $\beta=j(j+1)\cdots (j+|\alpha|-1) \pmod{m}$ for some $i,j\in [0,m-1]$. If $i\ne j$, then there exists $x\in \Sigma_{m}$ such that $|\alpha|_{x}\ne |\beta|_{x}$, which means $\alpha\nsim_{1} \beta$. 	Hence, $i=j$ and $\alpha=\beta$.
\end{proof}\textit{}
For every $c,d\in\Sigma_{m}$ with $c\ne d$, write \[[c,d]:=\begin{cases}
	\{c,c+1,\dots,d\}, & \text{ if }c<d;\\
	\{c,c+1,\dots,m-1\}\cup\{0,1,\dots,d\}, & \text{ if }c>d.
\end{cases}\]
Further, we write $(c,d):=[c,d]\backslash\{c,d\}$. Similarly, we define $(c,d]:=[c,d]\backslash\{c\}$ and $[c,d):=[c,d]\backslash\{d\}$. For example, the intervals $[2,5]$ and $(6,1)$ are illustrated in Figure \ref{1}. 

\begin{figure}  
	\centering  
	\begin{tikzpicture}[scale=0.7]
		\coordinate (O) at (0,0);
		\draw[fill=white] (O) circle (3);
		\draw[fill=white] (O) circle (1);
		 \pgfmathsetmacro\angdiv{360/12}
		 
		\draw [thick,fill=blue!30] ({90-(2-1/2)*\angdiv}:3) -- ({90-(2-1/2)*\angdiv}:1)  arc ({90-(14-1/2)*\angdiv}:{90-(8-1/2)*\angdiv}:1) -- ({90-(8-1/2)*\angdiv}:3)  arc ({90-(8-1/2)*\angdiv}:{90-(14-1/2)*\angdiv}:3) node[text=blue] at ({90-(11-1/2)*\angdiv}:3.7) {\scriptsize{$(6,1)$}};
		
		\draw [thick,fill=red!30] ({90-(7-1/2)*\angdiv}:3) -- ({90-(7-1/2)*\angdiv}:1)  arc ({90-(7-1/2)*\angdiv}:{90-(3-1/2)*\angdiv}:1) --({90-(3-1/2)*\angdiv}:3) arc ({90-(3-1/2)*\angdiv}:{90-(7-1/2)*\angdiv}:3) node[text=red] at ({90-4.5*\angdiv}:3.7) {\scriptsize{$[2,5]$}};

		  \draw
		\foreach \i in {0,...,11}{
				($({90-(\i-1/2)*\angdiv}:3)$) -- ($(({90-(\i-1/2)*\angdiv}:1)$)
			};	
		\foreach \i in {0,...,9}{
		\node at ({45-(\i-1/2)*\angdiv}:2.2) {\scriptsize{$\i$}};
		}
		\node at ({45-(10-1/2)*\angdiv}:2.2) {\scriptsize{$\cdots$}};
		\node at ({45-(11-1/2)*\angdiv}:2.2) {\scriptsize{$m-1$}};
	\end{tikzpicture}
	\caption{Illustration for the intervals $[2,5]$ and $(6,1)$.}  
	\label{1}  
\end{figure}
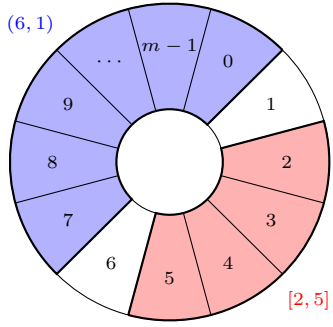

\begin{lemma}\label{e}
	Let  $\gamma\prec\sigma_{m}(a)$ for some $a\in\Sigma_{m}$. Then for every $c,d\in\Sigma_{m}$ with $c\ne d$, we have
	\begin{enumerate}[(1)]
		\item $|\gamma|_{c},|\gamma|_{d}\in\{0,1\}$. Moreover, if $|\gamma|_{c}=|\gamma|_{d}=1$, then either $\binom{\gamma}{cd}=1$ or $\binom{\gamma}{dc}=1$.
		\item If $\binom{\gamma}{cd}=1$, then for every $e\in [c,d]$, we have $|\gamma|_{e}=1$.
		\item If $\binom{\gamma}{cd}=1$ and $|\gamma|<m$, then $(d,c)\neq\emptyset$ and there exists $e\in(d,c)$ such that $|\gamma|_{e}=0$.
	\end{enumerate}
\end{lemma}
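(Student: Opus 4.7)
The plan is to exploit the very rigid structure of $\sigma_m(a)$: by definition, $\sigma_m(a)=a(a{+}1)\cdots(a{+}m{-}1)\pmod m$ lists every element of $\Sigma_m$ exactly once in a fixed cyclic order, and any factor $\gamma\prec\sigma_m(a)$ is therefore a contiguous block of this listing. First I would record this observation; it immediately gives that each letter occurs at most once in $\gamma$, hence $|\gamma|_c,|\gamma|_d\in\{0,1\}$. If both counts equal $1$, then the unique occurrences of $c$ and $d$ inside $\gamma$ appear in one definite order, producing either $\binom{\gamma}{cd}=1$ or $\binom{\gamma}{dc}=1$; this disposes of part~(1).

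For part~(2), I would identify $\gamma$ with an interval of positions of $\sigma_m(a)$: if $\gamma$ occupies positions $p,p{+}1,\dots,p{+}|\gamma|{-}1$, then position $q$ carries the letter $(a+q)\bmod m$. Assuming $\binom{\gamma}{cd}=1$, the positions $q_c,q_d$ of $c$ and $d$ inside $\gamma$ satisfy $q_c<q_d$; since the occupied positions form an interval, every integer $q$ with $q_c\le q\le q_d$ is also inside $\gamma$. As $q$ runs through this interval, the letter $(a+q)\bmod m$ traverses exactly the cyclic arc from $c$ to $d$, which is $[c,d]$ in the notation just introduced before the lemma. Hence $|\gamma|_e=1$ for every $e\in[c,d]$.

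Part~(3) is a counting argument on top of (2). For $c\neq d$ the cyclic arcs satisfy $\Sigma_m=[c,d]\sqcup(d,c)$, so $|[c,d]|+|(d,c)|=m$. By (2), every letter of $[c,d]$ appears in $\gamma$, hence $|[c,d]|\le|\gamma|<m$, which forces $|(d,c)|=m-|[c,d]|\ge 1$ and in particular $(d,c)\neq\emptyset$. Moreover, the strict inequality $|\gamma|<m$ means some letter of $\Sigma_m$ is absent from $\gamma$, and since every element of $[c,d]$ does appear, that missing letter must lie in the complementary arc $(d,c)$. The only mild obstacle across the three parts is keeping the cyclic-interval notation consistent when $c>d$; I would sidestep any case analysis by phrasing everything through the residues $(a+q)\bmod m$, so that $c<d$ and $c>d$ fall under one uniform argument, and I do not anticipate any other technical difficulty.
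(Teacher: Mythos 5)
Your proposal is correct and follows the same route as the paper, which simply states that the lemma ``follows directly from the definition of the morphism $\sigma_m$''; you have merely written out the routine details (each letter occurs exactly once in $\sigma_m(a)$ in cyclic order, a factor is a contiguous block, and $\Sigma_m=[c,d]\sqcup(d,c)$). All three parts of your argument, including the counting step for part (3), check out.
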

\begin{proof}
	The result follows directly from the definition of the morphism $\sigma_{m}$.
\end{proof}
\begin{lemma}\label{abel:a:b}
	Let $\alpha\prec\sigma_{m}(a)$, $\beta\prec\sigma_{m}(b)$, $\alpha'\prec\sigma_{m}(a')$ and $\beta'\prec\sigma_{m}(b')$ for some $a,b,a',b'\in\Sigma_{m}$. If $\alpha\beta\sim_{1}\alpha'\beta'$ and there exist $c,d\in\Sigma_{m}$ with $c\ne d$ such that $\binom{\alpha}{cd}=\binom{\alpha'}{dc}=\binom{\beta'}{dc}=1$, then $|\alpha|=m$.
\end{lemma}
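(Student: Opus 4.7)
The plan is to argue by contradiction: assume $|\alpha|<m$ and exhibit a single letter at which the Parikh vectors of $\alpha\beta$ and $\alpha'\beta'$ disagree, contradicting $\alpha\beta\sim_{1}\alpha'\beta'$. The three hypotheses $\binom{\alpha}{cd}=\binom{\alpha'}{dc}=\binom{\beta'}{dc}=1$ encode opposite relative orderings of $c,d$: in $\alpha$ the letter $c$ precedes $d$, whereas in $\alpha'$ and $\beta'$ the letter $d$ precedes $c$. By the cyclic geometry of $\sigma_{m}$, these orderings essentially determine which arc of $\Sigma_{m}$ each factor covers.

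First I would locate the discriminating letter. Since $\binom{\alpha}{cd}=1$ and, under the contradiction hypothesis, $|\alpha|<m$, Lemma~\ref{e}(3) supplies some $e\in(d,c)$ with $|\alpha|_{e}=0$. On the other side, from $\binom{\alpha'}{dc}=1$ and $\binom{\beta'}{dc}=1$ together with Lemma~\ref{e}(2) applied to the ordered pair $(d,c)$, one obtains $|\alpha'|_{x}=|\beta'|_{x}=1$ for every $x\in[d,c]$; in particular, since $(d,c)\subset[d,c]$, we have $|\alpha'|_{e}=|\beta'|_{e}=1$.

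The abelian equivalence $\alpha\beta\sim_{1}\alpha'\beta'$ then yields
\[
|\alpha|_{e}+|\beta|_{e}=|\alpha'|_{e}+|\beta'|_{e}=2,
\]
so $|\beta|_{e}=2$. However, $\beta\prec\sigma_{m}(b)$ and every letter of $\Sigma_{m}$ occurs exactly once in $\sigma_{m}(b)$, so $|\beta|_{e}\le 1$. This contradiction forces $|\alpha|=m$, as required.

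The only real subtlety is keeping the cyclic intervals straight: one must observe that the ``gap letter'' $e\in(d,c)$ produced for $\alpha$ by Lemma~\ref{e}(3) lies precisely in the complementary closed arc $[d,c]$ controlling $\alpha'$ and $\beta'$ via Lemma~\ref{e}(2). Once this cyclic bookkeeping is in place, the entire argument collapses to the one-line Parikh computation above.
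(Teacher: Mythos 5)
Your proof is correct and rests on the same core computation as the paper's: Lemma~\ref{e}(2) applied to $\binom{\alpha'}{dc}=\binom{\beta'}{dc}=1$ gives $|\alpha'|_{x}=|\beta'|_{x}=1$ on $[d,c]$, which is then transferred to $\alpha\beta$ via the abelian equivalence. The only difference is cosmetic: the paper argues directly (concluding $|\alpha|_{x}=1$ on $[d,c]$ and on $[c,d]$, hence on all of $\Sigma_m$), while you run the contrapositive through Lemma~\ref{e}(3) to produce a gap letter and a contradiction; both are fine.
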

\begin{proof}
	Since $\binom{\alpha'}{dc}=\binom{\beta'}{dc}=1$, by Lemma \ref{e}, for every $e\in[d,c]$, $|\alpha'|_{e}=|\beta'|_{e}=1$. It follows from $\alpha\beta\sim_{1}\alpha'\beta'$ that  $|\alpha|_{e}=1$ for all $e\in [d,c]$. Moreover, $\binom{\alpha}{cd}=1$ implies that $|\alpha|_{e}=1$ with $e\in[c,d]$. Hence, $|\alpha|_{e}=1$ for every $e\in\Sigma_{m}$, which means $|\alpha|=m$.
\end{proof}
\subsection{$2$-binomial equivalence between factors of $\mathbf{t}_m$}

Let $c, d\in\Sigma_{m}$ with $c\neq d$ and $u\in\Sigma_{m}^{*}$. We have
\begin{eqnarray*}
	\binom{\sigma_{m}(u)}{cd}&=&\sum_{0\leq i<j\leq |u|-1}|\sigma_{m}(u(i))|_{c}\cdot|\sigma_{m}(u(j))|_{d}+\sum_{0\leq i\leq |u|-1}\binom{\sigma_{m}(u(i))}{cd}\\
	&=& \binom{|u|}{2}+\sum_{x\in\Sigma_{m},|u|_{x}\geq 1}|u|_{x}\binom{\sigma_{m}(x)}{cd}.
\end{eqnarray*}
Note that for every $x\in\Sigma_{m}$, by the definition of $\sigma_{m}$,
\[\binom{\sigma_{m}(x)}{cd}=\begin{cases}
	1,& x\in(d,c];\\
	0,& \text{otherwise.}
\end{cases}\]
Then 
\begin{equation}\label{u:ab}
	\binom{\sigma_m(u)}{cd} = \binom{|u|}{2}+\sum_{x\in(d,c]}|u|_x.
\end{equation}
Moreover, for all $\alpha,\beta\in\Sigma_{m}^{*}$, 
\begin{align}
	\notag\binom{\alpha\sigma_{m}(u)\beta}{cd}&=\binom{\alpha\beta}{cd}+\binom{\sigma_{m}(u)}{cd}+|\alpha|_{c}|\sigma_{m}(u)|_{d}+|\sigma_{m}(u)|_{c}|\beta|_{d}\\
	&=\binom{\alpha\beta}{cd}+|u|(|\alpha|_{c}+|\beta|_{d})+\sum_{x\in(d,c]}|u|_x+\binom{|u|}{2}\label{alpha:beta:c:d}
\end{align}
where $0\leq \sum_{x\in(d,c]}|u|_x\leq |u|$.

Recall that the sets $\mathcal{S} $ and $\mathcal{P}$ are defined as
\begin{align*}
	\mathcal{S} & := \{w\in\Sigma_m^{*}\mid  w\triangleright\sigma_m(a) \text{ for some }a\in\Sigma_m \text{ and }|w|<m\},\\
	\mathcal{P} & := \{w\in\Sigma_m^{*}\mid  w\triangleleft\sigma_m(a) \text{ for some }a\in\Sigma_m \text{ and }|w|<m\}.
\end{align*}
Now we give the proof of Theorem \ref{main:result}.
\begin{proof}[Proof of Theorem \ref{main:result}]  
	Since $\max\{|\alpha|+|\beta|,|\alpha'|+|\beta'|\}\leq 2m-2<2m$, we have $||u'|-|v'||\leq 1$. Without loss of generality, we may assume $|u'|\geq |v'|$.
	The proof of Theorem \ref{main:result} is separated into the following cases according to the lengths of $\alpha$, $\beta$, $\alpha'$, $\beta'$, $u'$ and $v'$:
	\begin{enumerate}
		\item $|u'|=|v'|$ and $|\alpha|+|\beta|=0$, see Proposition \ref{0:0};
		\item $|u'| =|v'|$ and $|\alpha|+|\beta|\geq 1$, see Propositions \ref{a:b:1} and \ref {a:b:2};
		\item $|u'|=|v'|+1$ and $|\alpha|+|\beta|=0$, see Proposition \ref{a:b:empty};
		\item $|u'|=|v'|+1$, $|\alpha|=0$ and $|\beta|\geq 1$, see Proposition \ref{alpha:0};
		\item $|u'|=|v'|+1$, $|\beta|=0$ and $|\alpha|\geq 1$, see Proposition \ref{beta:0};
		\item $|u'|=|v'|+1$, $|\alpha|\geq 1$ and $|\beta|\geq 1$, see Proposition \ref{a:b:a':b'}. \qedhere
	\end{enumerate}
\end{proof}
\begin{proposition}\label{0:0}
	For every $u',v'\in\Sigma_{m}^{*}$, $\sigma_{m}(u')\sim_{2}\sigma_{m}(v')$ if and only if $u'\sim_{1} v'.$
\end{proposition}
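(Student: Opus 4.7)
The plan is to expand each $2$-binomial coefficient of $\sigma_m(u')$ in terms of the length $|u'|$ and the Parikh vector $\Psi(u')$, using equation (\ref{u:ab}) as the central tool, and then verify both implications directly.

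For the forward direction $(u'\sim_1 v'\Rightarrow \sigma_m(u')\sim_2 \sigma_m(v'))$, I would check each type of scattered subword of length at most two separately. For a single letter $a\in\Sigma_m$, since every $\sigma_m(y)$ is a cyclic permutation of $\Sigma_m$ containing each letter exactly once, we have $|\sigma_m(u')|_a=|u'|$, which depends only on $|u'|$. For a length-two subword $aa$, the same observation gives $\binom{\sigma_m(y)}{aa}=0$, so only cross-block pairs contribute and $\binom{\sigma_m(u')}{aa}=\binom{|u'|}{2}$. For $cd$ with $c\ne d$, equation (\ref{u:ab}) directly expresses the coefficient as $\binom{|u'|}{2}+\sum_{x\in(d,c]}|u'|_x$, which depends only on $|u'|$ and $\Psi(u')$. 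Thus $u'\sim_1 v'$ forces all these coefficients to coincide for $\sigma_m(u')$ and $\sigma_m(v')$.

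For the reverse direction, $\sigma_m(u')\sim_2 \sigma_m(v')$ first forces $|\sigma_m(u')|=|\sigma_m(v')|$, hence $|u'|=|v'|$. To recover the full Parikh vector I would apply (\ref{u:ab}) to each pair $(c,d)=((d+1)\bmod m,\,d)$; the set $(d,c]$ then collapses to the singleton $\{(d+1)\bmod m\}$, so the identity $\binom{\sigma_m(u')}{cd}=\binom{\sigma_m(v')}{cd}$ reduces to $|u'|_{(d+1)\bmod m}=|v'|_{(d+1)\bmod m}$. Letting $d$ range over $\Sigma_m$ exhausts every letter of the alphabet, yielding $u'\sim_1 v'$.

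I do not anticipate a serious obstacle: the cyclic structure of $\sigma_m$ makes the linear system encoding the length-two binomial data trivially invertible once consecutive pairs $(d, d{+}1 \bmod m)$ are chosen, so no combinatorial argument beyond (\ref{u:ab}) is needed. The only minor care point is to remember to treat $\binom{\cdot}{aa}$ separately from $\binom{\cdot}{cd}$ with $c\ne d$ when verifying forward closure, since $aa$ is not covered by formula (\ref{u:ab}).
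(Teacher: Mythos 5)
Your proposal is correct and follows essentially the same route as the paper: both directions reduce to formula \eqref{u:ab} (plus the separate computation $\binom{\sigma_m(u')}{aa}=\binom{|u'|}{2}$), and the reverse direction extracts the Parikh vector from the coefficients of consecutive pairs, where the paper uses the subwords $a(a-1)$ for $a\in\Sigma_m\setminus\{0\}$ together with $|u'|=|v'|$ while you use all $m$ wrap-around pairs $((d+1)\bmod m)\,d$ directly. This difference is purely cosmetic.
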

\begin{proof}
	If $u'\sim_{1} v'$, then  $|\sigma_{m}(u')|_{a}=|u'|=|v'|=|\sigma_{m}(v')|_{a}$ for all $a\in\Sigma_{m}$. So, for all $a\in\Sigma_{m}$,
	\[\binom{\sigma_{m}(u')}{aa}=\binom{|\sigma_{m}(u')|_{a}}{2}=\binom{|u'|}{2}=\binom{|v'|}{2}=\binom{|\sigma_{m}(v')|_{a}}{2}=\binom{\sigma_{m}(v')}{aa}.\]
	Moreover, for any $c,d\in\Sigma_{m}$ with $c\ne d$, since $u'\sim_{1}v'$,  by \eqref{u:ab},
	\[
	\binom{\sigma_m(u')}{cd} = \binom{|u'|}{2}+\sum_{x\in(d,c]}|u'|_x = \binom{|v'|}{2}+\sum_{x\in(d,c]}|v'|_x = \binom{\sigma_m(v')}{cd}.
	\]
	Therefore, $\sigma_{m}(u')\sim_{2}\sigma_{m}(v')$.
	
	Conversely, if $\sigma_{m}(u')\sim_{2}\sigma_{m}(v')$, then $|u'|=|v'|$ and $\binom{\sigma(u')}{a(a-1)}=\binom{\sigma(v')}{a(a-1)}$ for all $a\in\Sigma_{m}\backslash\{0\}$. Note that $((a-1),a]=\{a\}$. It follows from \eqref{u:ab} that  
	\[\binom{\sigma_{m}(u')}{a(a-1)}=\binom{|u'|}{2}+\sum_{x\in\{a\}}|u'|_x=\binom{|u'|}{2}+|u'|_{a}\]
	and
	\[\binom{\sigma_{m}(v')}{a(a-1)}=\binom{|v'|}{2}+\sum_{x\in\{a\}}|v'|_x=\binom{|v'|}{2}+|v'|_{a}.\]
	Thus, $|u'|_{a}=|v'|_{a}$ for every $a\in\Sigma_{m}\backslash\{0\}$. Since $|u'|=|v'|$, we have $u'\sim_{1}v'$.
\end{proof}
\begin{proposition}\label{a:b:1}
	Let $(\alpha,\beta)$, $(\alpha',\beta')\in\mathcal{S}\times\mathcal{P}$ with $|\alpha|+|\beta|\geq 1$. Let $u,v\in\mathcal{F}_{\mathbf{t}_{m}}(n)$ with $n\geq 3$.  If $\alpha\sigma_{m}(u)\beta\sim_{2} \alpha'\sigma_{m}(v)\beta'$ and $\alpha\beta\sim_{2}\alpha'\beta'$, then $\alpha=\alpha'$, $\beta=\beta'$ and $u\sim_{1} v$.
\end{proposition}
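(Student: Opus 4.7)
The plan is to reduce the statement to a linear system on Parikh data and then exploit the cube-free structure of $\mathbf{t}_m$. First, the abelian part of $\alpha\sigma_m(u)\beta\sim_2\alpha'\sigma_m(v)\beta'$, together with the identity $|\sigma_m(u)|_a=|u|$ and the hypothesis $\alpha\beta\sim_1\alpha'\beta'$, yields $|u|=|v|=:n$. Fixing any ordered pair $c\neq d$ in $\Sigma_m$ and equating formula (\ref{alpha:beta:c:d}) for the two factors, the terms $\binom{\alpha\beta}{cd}=\binom{\alpha'\beta'}{cd}$ and $\binom{n}{2}$ cancel, leaving
\[
n(a_c+b_d)=F(c)-F(d),
\]
where $a_c:=|\alpha|_c-|\alpha'|_c$, $b_d:=|\beta|_d-|\beta'|_d$, and $F(c):=\sum_{x=0}^{c}(|v|_x-|u|_x)$ (with the conventions $F(-1)=0$ and, necessarily, $F(m-1)=0$). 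Separating variables, $na_c-F(c)=-nb_d-F(d)$ for every $c\neq d$, so both sides equal a common constant $C$; hence $na_c=F(c)+C$ and $nb_d=-F(d)-C$ for every $c,d\in\Sigma_m$.

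By Lemma \ref{e}(1), $|\alpha|_c,|\alpha'|_c\in\{0,1\}$, so $a_c\in\{-1,0,1\}$ and therefore $F(c)+C\in\{-n,0,n\}$; evaluating at $c=m-1$ forces $C\in\{-n,0,n\}$. Consequently, for every $c\in\Sigma_m$, the quantity $f(c):=F(c)-F(c-1)=|v|_c-|u|_c$ is a difference of two elements of $\{-n-C,-C,n-C\}$, hence $f(c)\in\{-2n,-n,0,n,2n\}$; combined with $|f(c)|\leq n$ this gives $f(c)\in\{-n,0,n\}$. The critical step is ruling out the extreme values: $f(c)=\pm n$ would force $u=c^n$ or $v=c^n$, but for $n\geq 3$ this is impossible because $ccc$ is never a factor of $\mathbf{t}_m$. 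Indeed, within each block $\sigma_m(a)$ the letters are all distinct, and the only block-boundary that produces a pair $cc$ is the junction $\sigma_m(c+1)\sigma_m(c)$, where the pair is sandwiched between $c-1$ and $c+1$. Hence $f(c)=0$ for every $c$, so $u\sim_1 v$ and $F\equiv 0$.

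With $F\equiv 0$, the identity $na_c=C$ holds for every $c$; summing over $c\in\Sigma_m$ gives $|\alpha|-|\alpha'|=mC/n\in(-m,m)$, which forces $C=0$. Thus $a_c=0$ and symmetrically $b_d=0$ for all $c,d$, i.e., $\alpha\sim_1\alpha'$ and $\beta\sim_1\beta'$. Since $\alpha,\alpha'$ are suffixes and $\beta,\beta'$ are prefixes of images of $\sigma_m$, all of length less than $m$, Lemma \ref{alpha:beta:equal} then gives $\alpha=\alpha'$ and $\beta=\beta'$, finishing the proof. The main obstacle is the cube-freeness step in the second paragraph, which excludes $f(c)=\pm n$ and is precisely what allows the linear system to pin down the Parikh data of $u,v,\alpha,\alpha',\beta,\beta'$.
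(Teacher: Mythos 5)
Your proof is correct, and it is organized genuinely differently from the paper's. The paper argues in two stages: it first specializes \eqref{alpha:beta:c:d} to the pairs $(c,c-1)$, for which $\sum_{x\in(c-1,c]}|u|_x=|u|_c$, so that $|v|_c-|u|_c$ is immediately a multiple of $|u|$ and hence zero by cube-freeness; having $u\sim_1 v$, it then invokes Proposition \ref{0:0} to cancel $\binom{\sigma_m(u)}{cd}$ against $\binom{\sigma_m(v)}{cd}$, obtains $|\alpha|_c+|\beta|_d=|\alpha'|_c+|\beta'|_d$, and finishes with a case analysis according to whether $\alpha$ or $\beta$ is empty. You instead treat all ordered pairs $(c,d)$ simultaneously, and your separation-of-variables step (which silently but legitimately uses $m\geq 3$, since two values of $c$ are compared through a third index $d$ distinct from both) collapses the whole system to $na_c=F(c)+C$ and $nb_d=-F(d)-C$; the bound $a_c\in\{-1,0,1\}$ from Lemma \ref{e}, the cube-freeness bound excluding $f(c)=\pm n$, and the summation $|\alpha|-|\alpha'|=mC/n\in(-m,m)$ then pin down $C=0$, $F\equiv 0$, and $a\equiv b\equiv 0$ in one stroke. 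What your route buys is the elimination of both Proposition \ref{0:0} and the paper's empty/nonempty case split (your length argument handles $\alpha'=\varepsilon$ uniformly); what it costs is the extra bookkeeping with $F$ and $C$. Both arguments ultimately rest on the same three ingredients: formula \eqref{alpha:beta:c:d}, the absence of cubes in $\mathbf{t}_m$, and Lemma \ref{alpha:beta:equal}. One small point worth making explicit: the membership of $F(-1)=0$ in $\{-n-C,-C,n-C\}$ is needed when you compute $f(0)$, and it holds precisely because you have already forced $C\in\{-n,0,n\}$, so the order of your deductions matters there.
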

\begin{proof}
	Since $\alpha\beta\sim_{2}\alpha'\beta'$, $|u|=|v|$ and $\alpha\sigma_{m}(u)\beta\sim_{2} \alpha'\sigma_{m}(v)\beta'$, it follows from \eqref{alpha:beta:c:d} that for every $c\in \Sigma_{m}\backslash\{0\}$, 
	\begin{align*}
		0 & = \binom{\alpha\sigma_{m}(u)\beta}{c(c-1)}-\binom{\alpha\sigma_{m}(v)\beta}{c(c-1)}\\
		&=|u|\bigl(|\alpha|_{c}+|\beta|_{c-1}-|\alpha'|_{c}-|\beta'|_{c-1}\bigr)+|u|_{c}-|v|_{c}.
	\end{align*}
	This implies that $|v|_{c}-|u|_{c}$ is divisible by $|u|$. Note that $|u|=|v|\geq 3$ and the word $\mathbf{t}_{m}$ is cube-free.  This implies $-|u|< |v|_{c}-|u|_{c}<|u|$. Therefore, $|u|_{c}=|v|_{c}$ for all $c\in \Sigma_{m}\backslash\{0\}$. Note also that $|u|=|v|$, we have $u\sim_{1} v$.
	
	For any $c,d\in\Sigma_{m}$, by \eqref{alpha:beta:c:d}, we have
	\begin{equation}
		\binom{\alpha\sigma_{m}(u)\beta}{cd}  = \binom{\alpha\beta}{cd}+|u|(|\alpha|_c+|\beta|_d)+\binom{\sigma_{m}(u)}{cd}\label{prop:3-1}
	\end{equation}
	and 
	\begin{equation}
		\binom{\alpha'\sigma_{m}(v)\beta'}{cd}  = \binom{\alpha'\beta'}{cd}+|v|(|\alpha'|_{c}+|\beta'|_{d})+\binom{\sigma_{m}(v)}{cd}.\label{prop:3-2}
	\end{equation}
	Since $u\sim_{1}v$, by Proposition \ref{0:0}, we have $\sigma_{m}(u)\sim_{2}\sigma_{m}(v)$ and $\binom{\sigma_{m}(u)}{cd}=\binom{\sigma_{m}(v)}{cd}$. Further, since $\alpha\sigma_{m}(u)\beta\sim_{2} \alpha'\sigma_{m}(v)\beta'$ and  $\alpha\beta\sim_{2}\alpha'\beta'$, we have \[\binom{\alpha'\sigma_{m}(u)\beta'}{cd}=\binom{\alpha\sigma_{m}(v)\beta}{cd}~\text{and}~  \binom{\alpha\beta}{cd}=\binom{\alpha'\beta'}{cd}.\] Note that $|u|=|v|$. It follows from \eqref{prop:3-1} and \eqref{prop:3-2} that for any $c,d\in\Sigma_{m}$,
	\begin{equation}\label{prop:3-3}
		|\alpha|_c+|\beta|_d=|\alpha'|_{c}+|\beta'|_{d}.
	\end{equation}
	Now we prove $\alpha=\alpha'$ and $\beta=\beta'$. There are two sub-cases. 
	\begin{enumerate}[(i)]
		\item $|\alpha|\geq 1$ and $|\beta|\geq 1$. For any $c,d\in\Sigma_{m}$ satisfying $|\alpha|_{c}=|\beta|_{d}=1$, by \eqref{prop:3-3}, we have  
		\[|\alpha'|_{c}+|\beta'|_{d}=2.\]
		Since $(\alpha',\beta')\in\mathcal{S}\times\mathcal{P}$, we see that  $|\alpha'|_{c}=|\beta'|_{d}=1$. In the same way, one can show that $|\alpha|_{c}=|\beta|_{d}=1$ for any $c,d\in\Sigma_{m}$ satisfying $|\alpha'|_{c}=|\beta'|_{d}=1$. This implies that   $\alpha\sim_{1}\alpha'$ and $\beta\sim_{1}\beta'$. By Lemma \ref{alpha:beta:equal}, $\alpha=\alpha'$ and $\beta=\beta'$.	
		
		\item $|\alpha|=0$ or $|\beta|=0$.  It suffices to show the case $|\alpha|=0$. 
		
		If $\beta=d$ for some $d\in\Sigma_{m}$, then $\alpha'\beta'\sim_{2}\alpha\beta$ implies that $\alpha'\beta'=d$. For any $c\in\Sigma_{m}$ with $c\ne d$, by \eqref{prop:3-3}, $1=|\alpha'|_{c}+|\beta'|_{d}$. So, $\alpha'=\varepsilon\text{ and }\beta'=d.$
		
		If $|\beta|>1$, then there exists a $d\in\Sigma_m$ with $|\beta|_d=1$. For any $c\in \Sigma_{m}$ with $c\neq d$, by \eqref{prop:3-3}, we have
		\begin{equation}\label{c:0}
			|\alpha'|_{c}+|\beta'|_{d}=|\beta|_{d}=1.
		\end{equation}
		Since $\alpha\beta\sim_{2}\alpha'\beta'$, which means $\alpha\beta\sim_{1}\alpha'\beta'$, we have 
		\begin{equation}\label{c:1}
			|\alpha'|_{d}+|\beta'|_{d}=|\beta|_{d}=1.
		\end{equation}
		Combining \eqref{c:0} and \eqref{c:1}, we obtain that for all $x\in\Sigma_m$, $|\alpha'|_x=1-|\beta'|_d.$ If $|\beta'|_d=0$, then $|\alpha'|=m$ which contradicts to the fact $|\alpha'|<m$. If $|\beta'|_d=1$, then $\alpha'=\varepsilon$. Therefore, $\beta=\alpha\beta\sim_1\alpha'\beta'=\beta'$. By Lemma \ref{alpha:beta:equal}, $\beta=\beta'$. \qedhere
	\end{enumerate}
\end{proof}
\begin{proposition} \label{a:b:2}
	Let $(\alpha,\beta)$, $(\alpha', \beta')\in\mathcal{S}\times\mathcal{P}$ with $|\alpha|+|\beta|\geq 1$. If $\alpha\beta\nsim_{2}\alpha'\beta'$, then for any $u,u'\in \Sigma_{m}^{*}$ with $|u|=|u'|$, $\alpha\sigma_{m}(u)\beta\nsim_{2}\alpha'\sigma_{m}(u')\beta'$.
\end{proposition}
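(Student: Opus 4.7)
My plan is to prove the contrapositive: I would suppose that there exist $u, u' \in \Sigma_m^{*}$ with $|u| = |u'|$ such that $\alpha\sigma_m(u)\beta \sim_2 \alpha'\sigma_m(u')\beta'$, and then deduce $\alpha\beta \sim_2 \alpha'\beta'$, contradicting the hypothesis. Because every letter of $\Sigma_m$ appears exactly $|u| = |u'|$ times in both $\sigma_m(u)$ and $\sigma_m(u')$, abelian equivalence of the full words immediately yields $\alpha\beta \sim_1 \alpha'\beta'$. Combined with Lemma \ref{e}(1), which forces $|\alpha|_c, |\beta|_c, |\alpha'|_c, |\beta'|_c \in \{0, 1\}$, the diagonal binomials depend only on $|\alpha\beta|_c = |\alpha'\beta'|_c$, giving $\binom{\alpha\beta}{cc} = \binom{\alpha'\beta'}{cc}$ for every $c$ automatically.

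The core task is then to show that $F(c, d) := \binom{\alpha\beta}{cd} - \binom{\alpha'\beta'}{cd} = 0$ for every $c \neq d$. Substituting identity \eqref{alpha:beta:c:d} into $\binom{\alpha\sigma_m(u)\beta}{cd} = \binom{\alpha'\sigma_m(u')\beta'}{cd}$ and simplifying (using $|u| = |u'|$ and $\alpha\beta \sim_1 \alpha'\beta'$, so that $\delta_x := |\alpha'|_x - |\alpha|_x = |\beta|_x - |\beta'|_x$) gives
\[
    F(c, d) = |u|(\delta_c - \delta_d) - \sum_{x \in (d, c]}(|u|_x - |u'|_x).
\]
Specializing $d = c - 1 \pmod m$ contracts the sum to a single term and rearranges into the key identity
\[
    |u|_c - |u'|_c = |u|(\delta_c - \delta_{c-1}) - F(c, c-1) \qquad (c \in \Sigma_m).
\]

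The main obstacle is to derive a contradiction from this identity together with the basic realizability bound $\bigl||u|_c - |u'|_c\bigr| \leq |u|$. When $|\delta_c - \delta_{c-1}| \geq 2$ at some $c$, the dominant $|u|$-term of the right-hand side already exceeds the bounded correction $|F(c, c-1)| \leq 3$ as soon as $|u| \geq 4$, yielding the contradiction (small $|u|$ being dispatched by a finite direct check). When $|\delta_c - \delta_{c-1}| \leq 1$ everywhere but $F \not\equiv 0$, I would perform a case analysis on the possible arc-configurations of $\alpha, \alpha' \in \mathcal{S}$ and $\beta, \beta' \in \mathcal{P}$: Lemma \ref{abel:a:b} restricts which configurations are compatible with $\alpha\beta \sim_1 \alpha'\beta'$, and in each remaining configuration one verifies that at every $c$ with $\delta_c - \delta_{c-1} = \pm 1$ the value $F(c, c-1)$ carries the opposite sign, once more forcing $\bigl||u|_c - |u'|_c\bigr| > |u|$; the leftover sub-cases reduce, via Lemma \ref{alpha:beta:equal}, to $\alpha = \alpha'$ and $\beta = \beta'$, whence $F \equiv 0$ outright and the hypothesis $\alpha\beta \nsim_2 \alpha'\beta'$ itself fails. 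Either branch closes the argument.
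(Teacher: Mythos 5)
Your setup is sound and is essentially an algebraic repackaging of the paper's argument: the paper also reduces to the case $\alpha\beta\sim_1\alpha'\beta'$ and then compares $\binom{\alpha\sigma_m(u)\beta}{cd}$ with $\binom{\alpha'\sigma_m(u')\beta'}{cd}$ via \eqref{alpha:beta:c:d}, exploiting that the coefficient of $|u|$ is controlled by the prefix/suffix structure while $\bigl||u|_x-|u'|_x\bigr|\le|u|$. Your identity $F(c,d)=|u|(\delta_c-\delta_d)-\sum_{x\in(d,c]}(|u|_x-|u'|_x)$ is correct, and restricting to consecutive pairs is legitimate (indeed, under the assumed equivalence one has $F(c,d)=\sum_{x\in(d,c]}F(x,x-1)$ by telescoping, so consecutive pairs suffice).

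The gap is in the main branch. Your pivotal claim --- that in every surviving configuration, \emph{every} $c$ with $\delta_c-\delta_{c-1}=\pm 1$ has $F(c,c-1)$ of strictly opposite sign --- is false. Take $m=4$, $\alpha=3$, $\beta=23$, $\alpha'=23$, $\beta'=3$ (all legitimate elements of $\mathcal{S}$ or $\mathcal{P}$): then $\alpha\beta=323\sim_1 233=\alpha'\beta'$ and $\binom{323}{32}=1\ne 0=\binom{233}{32}$, so the hypothesis $\alpha\beta\nsim_2\alpha'\beta'$ holds; yet at $c=2$ one has $\delta_2-\delta_1=1$ while $F(2,1)=0$, so no contradiction arises there (it only arises at the pair $(3,2)$). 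What is actually needed is the existential statement that \emph{some} pair $(c,d)$ yields a violation, and locating that pair is precisely the substantive content of the paper's proof: a multi-case analysis on $\binom{\alpha\beta}{cd}\in\{1,2,3\}$ that repeatedly invokes Lemma \ref{e}(3) to manufacture an auxiliary letter $e\in(c,d)$ at which the inequality becomes decisive. Your proposal defers this entire analysis to ``a case analysis on the possible arc-configurations,'' so the hard part is not done, and the one concrete mechanism you do assert is incorrect as stated. Separately, the ``finite direct check'' for $|u|\le 3$ in the $|\delta_c-\delta_{c-1}|=2$ branch is not finite uniformly in $m$; in fact it is unnecessary, since in that branch a direct computation with \eqref{alpha:beta:c:d:2} gives $F(c,c-1)=-\operatorname{sgn}(\delta_c-\delta_{c-1})$ and hence $\bigl||u|_c-|u'|_c\bigr|=2|u|+1>|u|$ for all $|u|$ --- but noticing this is itself part of the missing verification.
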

\begin{proof}
	Since $|u|=|u'|$, we have $\sigma_m(u)\sim_1\sigma_m(u')$. If $\alpha\beta\nsim_{1}\alpha'\beta'$, then $\alpha\sigma_{m}(u)\beta\nsim_{1}\alpha'\sigma_{m}(u')\beta'$. In the rest, we assume that $\alpha\beta\sim_{1}\alpha'\beta'$. 
	
	Since $\alpha\beta\nsim_{2}\alpha'\beta'$ and $\alpha\beta\sim_{1}\alpha'\beta'$, there exist $c, d \in\Sigma_{m}$ with $c\neq d$ such that $\binom{\alpha\beta}{cd}\neq \binom{\alpha'\beta'}{cd}$. Note that 
	\begin{equation}\label{alpha:beta:c:d:2}
		\binom{\alpha\beta}{cd}=\binom{\alpha}{cd}+\binom{\beta}{cd}+|\alpha|_{c}|\beta|_{d}.
	\end{equation}
	This implies that $\binom{\alpha\beta}{cd},\binom{\alpha'\beta'}{cd}\in\{0,1,2,3\}$. Without loss of generality, we can assume that $\binom{\alpha\beta}{cd}>\binom{\alpha'\beta'}{cd}$.  Then, $\binom{\alpha\beta}{cd}\in\{1,2,3\}$. If $\binom{\alpha\beta}{cd}=3$, then  it follows from \eqref{alpha:beta:c:d:2} that $\binom{\alpha}{cd}=\binom{\beta}{cd}=1$ and $|\alpha\beta|_c=|\alpha\beta_d|=2$. Since $\alpha\beta\sim_1\alpha'\beta'$, we have $|\alpha'\beta'|_c=|\alpha'\beta'|_d=2$. Note also that $\binom{\alpha'\beta'}{cd}<3$, by Lemma \ref{e}, at least one of $\binom{\alpha'}{dc}$ and $\binom{\beta'}{dc}$ equals $1$. By Lemma \ref{abel:a:b}, $|\alpha'|=m$ or $|\beta'|=m$ which contradicts to the fact that $(\alpha',\beta')\in\mathcal{S}\times\mathcal{P}$. So $\binom{\alpha\beta}{cd}\neq 3$.
	
	In the following, we deal with the cases that $\binom{\alpha\beta}{cd}=1$ or $2$.
	By \eqref{alpha:beta:c:d},
	\begin{align*}
		\binom{\alpha\sigma_{m}(u)\beta}{cd} & \geq \binom{\alpha\beta}{cd}+|u|(|\alpha|_{c}+|\beta|_{d})+\binom{|u|}{2},\\
		\binom{\alpha'\sigma_{m}(u')\beta'}{cd} & \leq \binom{\alpha'\beta'}{cd}+|u'|(|\alpha'|_{c}+|\beta'|_{d})+\binom{|u'|}{2}+|u'|.
	\end{align*}
	Since $|u|=|u'|$ and $\binom{\alpha\beta}{cd}>\binom{\alpha'\beta'}{cd}$, we have 
	\begin{equation}\label{prop:4-1}
		\binom{\alpha\sigma_{m}(u)\beta}{cd} -\binom{\alpha'\sigma_{m}(u')\beta'}{cd}>|u|\bigl(|\alpha|_c+|\beta|_d-|\alpha'|_c-|\beta'|_d-1\bigr).
	\end{equation}
	
	When $\binom{\alpha\beta}{cd}=2$, by \eqref{alpha:beta:c:d:2}, we have $|\alpha|_c+|\beta|_d=2$. If $|\alpha'|_c|\beta'|_d=0$, then $|\alpha'|_c+|\beta'|_d\leq 1$. It follows from \eqref{prop:4-1} that $\binom{\alpha\sigma_{m}(u)\beta}{cd}>\binom{\alpha'\sigma_{m}(u')\beta'}{cd}$ and $\alpha\sigma_{m}(u)\beta\nsim_2\alpha'\sigma_{m}(u')\beta'$. If $|\alpha'|_c|\beta'|_d=1$, then $2=\binom{\alpha\beta}{cd}>\binom{\alpha'\beta'}{cd}$ implies $\binom{\alpha'}{cd}=\binom{\beta'}{cd}=0$. Since $\binom{\alpha\beta}{cd}=2$, following \eqref{alpha:beta:c:d:2}, we see that either $\binom{\alpha}{cd}=1$ or $\binom{\beta}{cd}=1$. 
	\begin{enumerate}
		\item If $\binom{\alpha}{cd}=1$, then $|\alpha\beta|_d=2$. Since $\alpha\beta\sim_1\alpha'\beta'$, we have $|\alpha'\beta'|_d=2$ which implies $|\alpha'|_d=1$. Recall that $|\alpha'|_c=1$ and $\binom{\alpha'}{cd}=0$. We have $\binom{\alpha'}{dc}=1$. By Lemma \ref{e}, there exists an $e\in(c,d)\neq \emptyset$ such that $|\alpha'|_e=0$ and $|\alpha|_e=1$. Moreover,  $\alpha\beta\sim_1\alpha'\beta'$ yields that $|\beta'|_e=1$ and $|\beta|_e=0$. Then $\binom{\alpha\beta}{ed}=2>\binom{\alpha'\beta'}{ed}$ and $|\alpha|_e+|\beta|_d-|\alpha'|_e-|\beta'|_d=1+1-0-1=1$. Applying \eqref{prop:4-1}, we have $\binom{\alpha\sigma_{m}(u)\beta}{ed}>\binom{\alpha'\sigma_{m}(u')\beta'}{ed}$.
		\item If $\binom{\beta}{cd}=1$, then $|\alpha\beta|_c=2$. It follows from $\alpha\beta\sim_1\alpha'\beta'$ and $\binom{\beta'}{cd}=0$ that $\binom{\beta'}{dc}=1$. By Lemma \ref{e}, there exists an $e\in(c,d)\neq \emptyset$ such that $|\beta'|_e=0$ and $|\beta|_e=1$. Moreover,  $\alpha\beta\sim_1\alpha'\beta'$ yields that $|\alpha'|_e=1$ and $|\alpha|_e=0$. Then $\binom{\alpha\beta}{ce}=2>\binom{\alpha'\beta'}{ce}$ and $|\alpha|_c+|\beta|_e-|\alpha'|_c-|\beta'|_e=1+1-1-0=1$. Applying \eqref{prop:4-1}, we have $\binom{\alpha\sigma_{m}(u)\beta}{ce}>\binom{\alpha'\sigma_{m}(u')\beta'}{ce}$.
	\end{enumerate}
	
	When $\binom{\alpha\beta}{cd}=1$, we have $\binom{\alpha'\beta'}{cd}=0$. Then $|\alpha|_c+|\beta|_d\geq 1\geq|\alpha'|_c+|\beta'|_d$. If $\bigl(|\alpha|_c+|\beta|_d\bigr)-\bigl(|\alpha'|_c+|\beta'|_d\bigr)\geq 1$, then by \eqref{prop:4-1}, we have $\binom{\alpha\sigma_{m}(u)\beta}{cd}>\binom{\alpha'\sigma_{m}(u')\beta'}{cd}$. Now suppose that $|\alpha|_c+|\beta|_d=|\alpha'|_c+|\beta'|_d=1$. There are four sub-cases.
	\begin{enumerate}
		\item $|\alpha|_c=1,|\beta|_d=0$. Then, it follows from $\binom{\alpha\beta}{cd}=1$ that $|\alpha|_d=1$ and $\binom{\alpha}{cd}=1$. 
		\begin{itemize}
			\item If $|\alpha'|_c=1$ and $|\beta'|_d=0$, then $\alpha\beta\sim_1\alpha'\beta'$ implies that $|\alpha'|_d=1$ and $\binom{\alpha'}{dc}=1$. By Lemma \ref{e}, there exists an $e\in(c,d)\neq \emptyset$ such that $|\alpha'|_e=0$ and $|\alpha|_e=1$. Then $\binom{\alpha\beta}{ed}=1>\binom{\alpha'\beta'}{ed}=0$ and $|\alpha|_e+|\beta|_d-|\alpha'|_e-|\beta'|_d=1+0-0-0=1$. Applying \eqref{prop:4-1}, we have $\binom{\alpha\sigma_{m}(u)\beta}{ed}>\binom{\alpha'\sigma_{m}(u')\beta'}{ed}$.
			\item If $|\alpha'|_c=0$ and $|\beta'|_d=1$. Since $\alpha\beta\sim_1\alpha'\beta'$, we have $|\beta'|_c=1$ and $\binom{\beta'}{dc}=1$. By Lemma \ref{e}, there exists an $e\in(c,d)\neq \emptyset$ such that $|\beta'|_e=0$ and $|\alpha|_e=1$. Moreover, $\alpha\beta\sim_1\alpha'\beta'$ yields that $|\alpha'|_e=1$ and $|\beta|_e=0$. Then $\binom{\alpha\beta}{ce}=1>\binom{\alpha'\beta'}{ce}=0$ and $|\alpha|_c+|\beta|_e-|\alpha'|_c-|\beta'|_e=1+0-0-0=1$. Applying \eqref{prop:4-1}, we have $\binom{\alpha\sigma_{m}(u)\beta}{ce}>\binom{\alpha'\sigma_{m}(u')\beta'}{ce}$.
		\end{itemize}
		\item $|\alpha|_c=0,|\beta|_d=1$. It follows from $\binom{\alpha\beta}{cd}=1$ that $|\beta|_c=1$ and $\binom{\beta}{cd}=1$. 
		\begin{itemize}
			\item If $|\alpha'|_c=1$ and $|\beta'|_d=0$, then $\alpha\beta\sim_1\alpha'\beta'$ yields $|\alpha|_d=0$, $|\alpha'|_d=1$ and $\binom{\alpha'}{dc}=1$. By Lemma \ref{e}, there exists an $e\in(d,c)$ such that $|\alpha'|_e=1$ and $|\beta|_e=0$. Since $\alpha\beta\sim_1\alpha'\beta'$, we have $|\beta'|_e=0$ and $|\alpha|_e=1$. Then $\binom{\alpha'\beta'}{de}=1>\binom{\alpha\beta}{de}=0$ and $|\alpha'|_d+|\beta'|_e-|\alpha|_d-|\beta|_e=1+0-0-0=1$. Applying \eqref{prop:4-1}, we have $\binom{\alpha'\sigma_{m}(u')\beta'}{de}>\binom{\alpha\sigma_{m}(u)\beta}{de}$.
			\item If $|\alpha'|_c=0$ and $|\beta'|_d=1$, then $\binom{\beta'}{dc}=1$. Using a similar argument, there exists an $e\in(c,d)$ such that $|\beta|_e=1$ and $|\beta'|_e=0$. Consequently, $\binom{\alpha\sigma_{m}(u)\beta}{ce}>\binom{\alpha'\sigma_{m}(u')\beta'}{ce}$.
		\end{itemize}
	\end{enumerate}
\end{proof}

\begin{proposition}\label{a:b:empty}
	Let $(\alpha,\beta)\in\mathcal{S}\times\mathcal{P}$ and $\alpha, \beta\neq\varepsilon$. Then for any $u\in\mathcal{F}_{\mathbf{t}_{m}}(n)$ and $v\in\mathcal{F}_{\mathbf{t}_{m}}(n+1)$ with $n\geq 3$, $\alpha\sigma_{m}(u)\beta\nsim_{2}\sigma_{m}(v)$.
\end{proposition}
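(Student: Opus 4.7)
The approach is to assume $\alpha\sigma_m(u)\beta \sim_2 \sigma_m(v)$ and derive a contradiction by forcing $u = a^n$ for some letter $a\in\Sigma_m$. First I would exploit that $\sim_2$ implies $\sim_1$: matching lengths yields $|\alpha|+|\beta|=m$, and since $\sigma_m$ is Parikh-constant (so $|\sigma_m(w)|_x=|w|$ for every $x$), matching Parikh vectors yields $|\alpha|_x+|\beta|_x=1$ for every $x\in\Sigma_m$. Combining this complementary letter-set condition with $(\alpha,\beta)\in\mathcal{S}\times\mathcal{P}$ and $\alpha,\beta\neq\varepsilon$ forces $\alpha$ to be the length-$|\alpha|$ suffix of $\sigma_m(a)$ and $\beta$ the length-$|\beta|$ prefix of $\sigma_m(a)$ for the \emph{same} letter $a\in\Sigma_m$. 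Consequently $\alpha\beta=\sigma_m(c_0)$ with $c_0\equiv a+|\beta|\pmod m$, and $c_0\neq a$ because $1\leq|\beta|\leq m-1$.

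Next I would extract the contradiction from a single well-chosen $2$-binomial identity. Taking $c=a-1\pmod m$ (the last letter of $\alpha$) and $d=a$ (the first letter of $\beta$), the cyclic interval $(d,c]$ is exactly $\Sigma_m\setminus\{a\}$. Applying formulas \eqref{alpha:beta:c:d} and \eqref{u:ab}, together with $\binom{\alpha\beta}{cd}=\binom{\sigma_m(c_0)}{cd}=1$ (since $c_0\in(d,c]$) and $|\alpha|_c=|\beta|_d=1$, the equality $\binom{\alpha\sigma_m(u)\beta}{cd}=\binom{\sigma_m(v)}{cd}$ simplifies, using $\binom{n+1}{2}-\binom{n}{2}=n$, to
$$n = |u|_a - |v|_a.$$
Since $|u|_a\leq|u|=n$ and $|v|_a\geq 0$, this forces $|u|_a=n$, i.e., $u=a^n$.

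But $u\in\mathcal{F}_{\mathbf{t}_m}(n)$ with $n\geq 3$, whereas $aaa$ is not a factor of the cube-free word $\mathbf{t}_m$; this contradiction completes the proof. I expect the main obstacle to be the first step, namely showing that $\alpha\beta$ must be a full $\sigma_m$-image $\sigma_m(c_0)$: this requires translating the abelian-level constraint $|\alpha|_x+|\beta|_x=1$ into a precise alignment between the cyclic patterns in $\mathcal{S}$ and $\mathcal{P}$, and in particular identifying the common letter $a$. Once that structural fact is available, the rest is a single targeted application of the binomial formulas at the distinguished pair $(c,d)=(a-1,a)$.
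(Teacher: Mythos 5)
Your proof is correct, and it reorganizes the argument in a genuinely cleaner way than the paper. The paper writes $v=v'c$ and splits on whether $\alpha\beta=\sigma_m(c)$ (the image of the \emph{last letter of} $v$): in the equal case it compares $\binom{\cdot}{ed}$ at the seam between $\alpha$ and $\beta$ and invokes cube-freeness, and in the unequal case it hunts for a pair $d,e$ with $|\alpha|_d=|\beta|_e=1$ and $\binom{\sigma_m(c)}{de}=0$ to get a strict inequality. You instead prove up front the structural fact the paper leaves implicit: the abelian constraints $|\alpha|+|\beta|=m$ and $|\alpha|_x+|\beta|_x=1$, together with $\alpha\in\mathcal{S}$, $\beta\in\mathcal{P}$ and nonemptiness, force the supports (which are cyclic intervals) to be complementary, hence force $\alpha$ and $\beta$ to be the matching suffix and prefix of $\sigma_m(a)$ for a common $a$, so $\alpha\beta=\sigma_m(a+|\beta|)$. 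This makes the last letter of $v$ irrelevant and collapses the case analysis to the single identity at the pair $(a-1,a)$, where \eqref{alpha:beta:c:d} and \eqref{u:ab} give $n=|u|_a-|v|_a$, whence $u=a^n$, contradicting cube-freeness. I checked the computation ($\binom{\alpha\beta}{(a-1)a}=1$, $|\alpha|_{a-1}=|\beta|_a=1$, $\sum_{x\in(a,a-1]}|u|_x=n-|u|_a$, and $\binom{n+1}{2}-\binom{n}{2}=n$) and it is right. What your route buys is uniformity and a sharper conclusion ($|u|_a=n$ exactly, rather than two separate inequalities); what the paper's route buys is that it never needs the structural classification of $\alpha\beta$, only the existence of a suitable letter pair in each case. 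Both rest on the same two tools, the seam computation and cube-freeness.
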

\begin{proof}
	Suppose that $v=v'c$ where $c\in \Sigma_{m}$. Since $|u|=|v'|$,  we have $\sigma_{m}(u)\sim_{1}\sigma_{m}(v')$. If $\alpha\beta\nsim_{1}\sigma_{m}(c)$, then $\alpha\sigma_{m}(u)\beta\nsim_{1}\sigma_{m}(v)$.  Now we assume that $\alpha\beta\sim_{1}\sigma_{m}(c)$. There are two cases.
	\begin{enumerate}[(i)]
		\item $\alpha\beta=\sigma_{m}(c)$. Let $d$ be the last letter of $\alpha$ and let $e$ be the first letter of $\beta$. By the definition of $\sigma_{m}$, $e\equiv d+1\pmod{m}$ and $|\alpha|_{e}=|\beta|_{d}=0$. By \eqref{alpha:beta:c:d}, 
		\[\binom{\alpha\sigma_{m}(u)\beta}{ed}=\binom{\sigma_{m}(u)}{ed}=\binom{|u|}{2}+\sum_{x\in(d,e]}|u|_{x}=\binom{|u|}{2}+|u|_{e}\]
		and 
		\[\binom{\sigma_{m}(v'c)}{ed}=\binom{\sigma_{m}(v')}{ed}+|v'|=\binom{|v'|}{2}+|v'|_{e}+|v'|.\]
		Since the word $\mathbf{t}_{m}$ is cube-free and $|u|\geq 3$, $|u|_{e}<|u|=|v'|$. Hence $\binom{\alpha\sigma_{m}(u)\beta}{ed}<\binom{\sigma_{m}(v'c)}{ed}$ and $\alpha\sigma_{m}(u)\beta\nsim_{2}\sigma_{m}(v'c)$.
		\item $\alpha\beta\ne\sigma_{m}(c)$. There exist $d,e\in\Sigma_{m}$ such that $|\alpha|_{d}=1=|\beta|_{e}$ and  $\binom{\sigma_{m}(c)}{de}=0$. Note that $|\alpha|_{e}=|\beta|_{d}=0$. By \eqref{alpha:beta:c:d}, 
		\[\binom{\alpha\sigma_{m}(u)\beta}{de}\geq 1+2|u|+\binom{|u|}{2}>2|v'|+\binom{|v'|}{2}\geq \binom{\sigma_{m}(v'c)}{de}.\]
		Therefore $\alpha\sigma_{m}(u)\beta\nsim_{2}\sigma_{m}(v'c)$.\qedhere
	\end{enumerate}
\end{proof}

\begin{proposition}\label{alpha:0}
	Let $\beta$, $\beta'\in\mathcal{P}$ and $\alpha'\in\mathcal{S}$ where $\beta$, $\alpha'$ and $\beta'$ are nonempty words. Then for any $u\in\mathcal{F}_{\mathbf{t}_{m}}(n+1)$ and $v\in\mathcal{F}_{\mathbf{t}_{m}}(n)$ with $n\geq 3$, we have 
	$\sigma_{m}(u)\beta\nsim_{2}\alpha'\sigma_{m}(v)\beta'$.
\end{proposition}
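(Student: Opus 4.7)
The plan is to suppose for contradiction that $\sigma_m(u)\beta\sim_2\alpha'\sigma_m(v)\beta'$ and then exhibit a pair $(c,d)\in\Sigma_m\times\Sigma_m$ with $c\ne d$ for which the two binomial coefficients $\binom{\cdot}{cd}$ disagree. First, $\sim_2$ implies $\sim_1$, and since every letter of $\Sigma_m$ occurs exactly $n+1$ (resp.\ $n$) times in $\sigma_m(u)$ (resp.\ $\sigma_m(v)$), the abelian identity simplifies to $|\alpha'|_a+|\beta'|_a-|\beta|_a=1$ for every $a\in\Sigma_m$. Summing over $a$ yields $|\alpha'|+|\beta'|=m+|\beta|$, and since both $|\alpha'|,|\beta'|\leq m-1$, we obtain $|\alpha'|,|\beta'|\geq|\beta|+1$.

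Next, I would exploit the fact that $\alpha'\in\mathcal{S}$ and $\beta,\beta'\in\mathcal{P}$, so each of their alphabets is a cyclic interval in $\Sigma_m$. Writing $I_\beta=[b_0,b_0+|\beta|-1]$ for the alphabet of $\beta$, the identity above forces $I_\beta\subseteq I_{\alpha'}\cap I_{\beta'}$, $I_{\alpha'}\cup I_{\beta'}=\Sigma_m$, and $I_{\alpha'}\cap I_{\beta'}=I_\beta$. A short bookkeeping check --- two intervals cannot simultaneously extend $I_\beta$ on the same side without producing some $a\notin I_\beta$ with $|\alpha'|_a+|\beta'|_a=2$, contradicting the abelian identity --- reduces the situation to exactly two cases. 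In Case A, $I_{\alpha'}=[b_0,b_0+|\alpha'|-1]$ and $I_{\beta'}=[b_0-(|\beta'|-|\beta|),b_0+|\beta|-1]$ (that is, $\alpha'$ extends $I_\beta$ rightwards and $\beta'$ extends it leftwards); Case B is the symmetric mirror situation.

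In Case A, I would take $c=b_0$ and $d=b_0-1\pmod m$, so $(d,c]=\{b_0\}$. Plugging the values $|\beta|_d=0$, $|\alpha'|_c=|\beta'|_d=1$, $\binom{\beta}{cd}=0$, and $\binom{\alpha'\beta'}{cd}=1$ (the unique $d$ in $\alpha'\beta'$ lies to the right of the first $b_0\in\alpha'$ but to the left of the second $b_0\in\beta'$) into formula \eqref{alpha:beta:c:d} gives
\[
\binom{\sigma_m(u)\beta}{cd}-\binom{\alpha'\sigma_m(v)\beta'}{cd}=|u|_{b_0}-|v|_{b_0}-n-1\leq -1,
\]
where the inequality uses that $\mathbf{t}_m$ is cube-free and $|u|=n+1\geq 4$, so $u\ne b_0^{n+1}$ and hence $|u|_{b_0}\leq n$. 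In Case B, I would take $c=b_0+|\beta|-1$ and $d=b_0+|\beta|\pmod m$; now $(d,c]=\Sigma_m\setminus\{d\}$, and $\binom{\alpha'\beta'}{cd}=2$ (the letter $c$ occurs both as the last letter of $\alpha'$ and inside $\beta'$, in each case preceding the unique $d\in\beta'$). The analogous computation yields
\[
\binom{\sigma_m(u)\beta}{cd}-\binom{\alpha'\sigma_m(v)\beta'}{cd}=|v|_d-|u|_d-n-1\leq -1,
\]
using only the trivial bound $|v|_d\leq|v|=n$. Either way we reach a contradiction.

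The main obstacle I expect is the structural step: one must translate the abelian identity into the cyclic-interval picture carefully and verify that the disjointness constraints on the ``extensions'' of $I_\beta$ leave only the two symmetric configurations A and B. Once that dichotomy is in hand, the specific $(c,d)$ choices are short, and the contradiction in each case reduces to a single application of \eqref{alpha:beta:c:d} together with an elementary count bound (augmented by cube-freeness of $\mathbf{t}_m$ in Case A).
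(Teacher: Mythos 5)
Your proposal is correct and takes essentially the same approach as the paper: use the abelian ($\sim_1$) constraint to pin down the structure of $\alpha',\beta',\beta$, then exhibit a length-$2$ word on which formula \eqref{alpha:beta:c:d} together with cube-freeness of $\mathbf{t}_m$ forces a strict inequality. Your two interval configurations A and B correspond exactly to the paper's dichotomy on whether the first letter of $\beta$ is the first letter of $\beta'$; the only cosmetic differences are that the paper peels off the first letter of $u$ (writing $u=au'$ so that $|u'|=|v|$ and inequality \eqref{prop:4-1} applies) rather than working with $|u|=|v|+1$ directly, and in Case A it uses the reversed pair $(b_0-1)\,b_0$ instead of your $b_0\,(b_0-1)$.
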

\begin{proof}
	Let $u=au'$ where $a\in\Sigma_{m}$. If $\alpha'\beta'\nsim_{1}\sigma_{m}(a)\beta$, then $\sigma_{m}(au')\beta\nsim_{1}\alpha'\sigma_{m}(v)\beta'$. In the following, we assume that $\sigma_{m}(a)\beta\sim_{1}\alpha'\beta'$. Then, for every $x\in\Sigma_{m}$ with $|\beta|_{x}=1$, we have $|\alpha'|_{x}=|\beta'|_{x}=1$. Noting that $\sigma_{m}(a)\beta\sim_{1}\alpha'\beta'$, we see that $|\beta'|>|\beta|$.

	Let $b$ and $c$ be the first and the last letter of $\beta$ respectively. (If $|\beta|=1$, then $b=c$.) Since $\sigma_{m}(a)\beta\sim_{1}\alpha'\beta'$, we have $|\alpha'|_b=|\beta'|_b=|\alpha'|_c=|\beta'|_c=1$. 
	\begin{itemize}
		\item If $b$ is the first letter of $\beta'$, then $c$ is not the last letter of $\beta'$. Moreover, letting $d\equiv c+1\pmod{m}$, we have $|\beta|_d=0$ and $|\beta'|_d=1$. It follows from $\sigma_{m}(a)\beta\sim_{1}\alpha'\beta'$ that $|\alpha'|_d=0$. Then $\binom{\sigma_m(a)\beta}{cd}\leq 1<2=\binom{\alpha'\beta'}{cd}$ and $|\alpha'|_c+|\beta'|_d-|\sigma_m(a)|_c-|\beta|_d=1+1-1-0=1$. Using \eqref{prop:4-1}, we have $\binom{\sigma_{m}(au')\beta}{cd}<\binom{\alpha'\sigma_{m}(v)\beta'}{cd}$ and  $\sigma_{m}(au')\beta\nsim_{2}\alpha'\sigma_{m}(v)\beta'$.
		\item If $b$ is not the first letter of $\beta'$, then $e\equiv b-1\pmod{m}$ occurs in $\beta'$ and $|\alpha'|_e=0$. Now $\binom{\sigma_m(a)\beta}{eb}\geq 1=\binom{\alpha'\beta'}{eb}$ and $|\sigma_m(a)|_e+|\beta|_b-|\alpha'|_e-|\beta'|_b=1+1-0-1=1$. By the fact that the word $\mathbf{t}_{m}$ is cube-free and $|u'|\geq 3$, we have $\sum_{x\in(b,e]}|u'|_{x}=|u'|-|u'|_b>0$. Using \eqref{alpha:beta:c:d}, we have $\binom{\sigma_{m}(au')\beta}{eb}>\binom{\alpha'\sigma_{m}(v)\beta'}{eb}$ and  $\sigma_{m}(au')\beta\nsim_{2}\alpha'\sigma_{m}(v)\beta'$. \qedhere
	\end{itemize}
\end{proof}

\begin{proposition}\label{beta:0}
	Let $\alpha,\alpha'\in\mathcal{S}$ and $\beta'\in\mathcal{P}$ where $\alpha$, $\alpha'$ and $\beta'$ are nonempty. Then for any $u\in\mathcal{F}_{\mathbf{t}_{m}}(n+1)$ and $v\in\mathcal{F}_{\mathbf{t}_{m}}(n)$ with $n\geq 3$, we have 
	$\alpha\sigma_{m}(u)\nsim_{2}\alpha'\sigma_{m}(v)\beta'$.
\end{proposition}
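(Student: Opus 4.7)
The plan is to mirror the proof of Proposition \ref{alpha:0} via the symmetric decomposition. Write $u=u'a$ with $a\in\Sigma_m$, so $\alpha\sigma_m(u)=\alpha\sigma_m(u')\sigma_m(a)$ has the same shape as $\alpha'\sigma_m(v)\beta'$ with a middle of common length $n$. If $\alpha\sigma_m(a)\nsim_1\alpha'\beta'$ then abelian inequivalence of the whole words follows immediately, so I assume $\alpha\sigma_m(a)\sim_1\alpha'\beta'$. The length identity $|\alpha|+m=|\alpha'|+|\beta'|$ together with $|\alpha'|,|\beta'|<m$ forces $|\alpha'|>|\alpha|$ and $|\beta'|>|\alpha|$, and for every letter occurring in $\alpha$ its counts in $\alpha'$ and in $\beta'$ are both equal to $1$.

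Let $b$ and $c$ be the first and last letters of $\alpha$. I would split into two cases mirroring Proposition \ref{alpha:0}, according to whether $c$ is the last letter of $\alpha'$. In Case~1, $d\equiv b-1\pmod{m}$ lies immediately before $b$ in $\alpha'$ (since $\alpha'$ extends $\alpha$ to the left), so $|\alpha'|_d=1$ and the abelian identity forces $|\beta'|_d=0$; the pair $(d,b)$ then gives $\binom{\alpha\sigma_m(a)}{db}\leq 1<2=\binom{\alpha'\beta'}{db}$. In Case~2, $f\equiv c+1\pmod{m}$ lies immediately after $c$ in $\alpha'$, so $|\alpha'|_f=1$ and $|\beta'|_f=0$; the pair $(c,f)$ gives $\binom{\alpha\sigma_m(a)}{cf}\geq 1=\binom{\alpha'\beta'}{cf}$. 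In each case I would combine these boundary counts with formula \eqref{alpha:beta:c:d}, noting that the relevant arcs $(b,d]$ and $(f,c]$ cover all of $\Sigma_m$ except one letter, so the middle sums telescope into $n-|u'|_\ast$ and $n-|v|_\ast$ for the appropriate letter.

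The main obstacle is ensuring the resulting strict inequality for the full binomial coefficients. Because $u'$ and $v$ are not assumed abelian equivalent, the differences $|v|_\ast-|u'|_\ast$ can a priori pull either way, and one has to check that the boundary gap together with the linear-in-$n$ gap dominates. This is exactly where the cube-freeness of $\mathbf{t}_m$ enters, just as in the proof of Proposition \ref{alpha:0}: for $n\geq 3$ no letter fills an entire factor, so $|u'|_\ast\leq n-1$, which suffices to produce the strict sign of the difference in both Case~1 (yielding a strictly smaller coefficient on the left) and Case~2 (yielding a strictly larger one). Either way, $\alpha\sigma_m(u)\nsim_{2}\alpha'\sigma_m(v)\beta'$.
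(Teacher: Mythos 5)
Your proposal is correct and follows essentially the same route as the paper's proof: the same decomposition $u=u'a$, the same reduction to comparing the boundary words $\alpha\sigma_m(a)$ and $\alpha'\beta'$, and the same use of \eqref{alpha:beta:c:d} together with cube-freeness to close the gap. Your case split (on whether the last letter of $\alpha$ is the last letter of $\alpha'$, using the pairs $(b-1,b)$ and $(c,c+1)$) is just the mirror image of the paper's split on first letters, and all the resulting inequalities check out.
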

\begin{proof}
	Let $u=u'b$ for some $b\in\Sigma_{m}$. If $\alpha\sigma_{m}(b)\nsim_{1}\alpha'\beta'$, then  $\alpha\sigma_{m}(u)\nsim_{1}\alpha'\sigma_{m}(v)\beta'$. Now we assume that $\alpha\sigma_{m}(b)\sim_{1}\alpha'\beta'$. Then $|\alpha'|>|\alpha|$ and for every $x\in\Sigma_{m}$ with $|\alpha|_{x}=1$, $|\alpha'|_{x}=|\beta'|_{x}=1$.
	Let $c$ and $d$ be the first and last letter of $\alpha$ respectively. (If $|\alpha|=1$, then $c=d$.)
	\begin{itemize}
		\item If $c$ is the first letter of $\alpha'$, then set $e\equiv d+1\pmod{m}$. Consequently, $\binom{\alpha'}{de}=1$ and $|\alpha|_{e}=0$. By $\alpha\sigma_{m}(b)\sim_{1}\alpha'\beta'$, we have $|\beta'|_{e}=0$. Note that  $\binom{\alpha\sigma_m(b)}{de}\geq 1=\binom{\alpha'\beta'}{de}$, $|\alpha|_d+|\sigma_m(b)|_e-|\alpha'|_d-|\beta'|_e=1+1-1-0=1$. It follows from the word $\mathbf{t}_{m}$ is cube-free and $|u|\geq 3$ that $\sum_{x\in(e,d]}|u'|_x=|u'|-|u'|_e>0$. By \eqref{alpha:beta:c:d} and $|u'|=|v|$, we have  \[\binom{\alpha\sigma_{m}(u'b)}{de}>1+2|u'|+\binom{|u'|}{2}\geq \binom{\alpha'\sigma_{m}(v)\beta'}{de}.\] Hence $\alpha\sigma_{m}(u'b)\nsim_{2}\alpha'\sigma_{m}(v)\beta'$.
		\item If $c$ is not the first letter of $\alpha'$, then set $e\equiv c-1\pmod{m}$. Then $\binom{\alpha'}{ec}=1$ and $|\alpha|_{e}=0$. Now $\binom{\alpha'\beta'}{ec}=2>1\geq\binom{\alpha\sigma_m(b)}{ec}$ and $|\alpha'|_e+|\beta'|_c-|\alpha|_e-|\sigma_m(b)|_c=1+1-0-1=1$. By \eqref{prop:4-1}, $\binom{\alpha'\sigma_{m}(v)\beta'}{ec}>\binom{\alpha\sigma_{m}(u'b)}{ec}$. So, $\alpha\sigma_{m}(u'b)\nsim_{2}\alpha'\sigma_{m}(v)\beta'$. \qedhere
	\end{itemize}
\end{proof}

\begin{proposition}\label{a:b:a':b'}
	Let $(\alpha,\beta),(\alpha',\beta')\in\mathcal{S}\times\mathcal{P}$ and $\min\{|\alpha|,|\beta|,|\alpha'|,|\beta'|\}\geq 1$. Then for any $u\in\mathcal{F}_{\mathbf{t}_{m}}(n+1)$, $v\in\mathcal{F}_{\mathbf{t}_{m}}(n)$ with $n\geq 3$, we have
	$\alpha\sigma_{m}(u)\beta\nsim_{2}\alpha'\sigma_{m}(v)\beta'$.
\end{proposition}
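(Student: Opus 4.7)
The plan is to argue by contradiction, assuming $\alpha\sigma_m(u)\beta \sim_2 \alpha'\sigma_m(v)\beta'$, and to derive the contradiction in three stages.

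\emph{Length and abelian reduction.} Equating lengths and using $|u|=|v|+1$ yields $|\alpha|+|\beta|+m = |\alpha'|+|\beta'|$. Combined with $|\alpha'|,|\beta'| \leq m-1$ this forces $|\alpha|+|\beta| \leq m-2$ and in particular $m \geq 4$, so the statement is vacuously true when $m=3$. Since $\sigma_m$ is Parikh-constant, $\sim_1$ reduces to the scalar identities $|\alpha'|_x+|\beta'|_x = |\alpha|_x+|\beta|_x+1$ for every $x \in \Sigma_m$. As each quantity lies in $\{0,1\}$, one deduces: the letter sets $A := \mathrm{alph}(\alpha)$ and $B := \mathrm{alph}(\beta)$ are disjoint; every $x \in T := A \cup B$ appears once in each of $\alpha'$ and $\beta'$; every $x \in E := \Sigma_m \setminus T$ belongs to exactly one of $\alpha',\beta'$. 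Split $E = E_1 \cup E_2$ (disjointly) so that $E_1$ sits inside $\alpha'$ only and $E_2$ inside $\beta'$ only; both parts are nonempty thanks to $|\alpha'|,|\beta'| < m$.

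\emph{Two witnesses pinning down the supports of $u$ and $v$.} Let $d_\alpha$ be the last letter of $\alpha$ and $d_\beta$ the first letter of $\beta$. Applying \eqref{alpha:beta:c:d} with $(c,d)=(d_\beta,d_\alpha)$, and using $|\alpha|_{d_\beta}=|\beta|_{d_\alpha}=0$ (disjointness) together with $|\alpha'|_{d_\beta}=|\beta'|_{d_\alpha}=1$ (both lie in $T$), the identity $\binom{\alpha\sigma_m(u)\beta}{d_\beta d_\alpha}=\binom{\alpha'\sigma_m(v)\beta'}{d_\beta d_\alpha}$ simplifies to
\[
\binom{\alpha'\beta'}{d_\beta d_\alpha} + n \;=\; \sum_{x \in (d_\alpha, d_\beta]}\bigl(|u|_x - |v|_x\bigr).
\]
The right side is bounded above by $|u|=n+1$, while the left side is at least $1+n$ since the crossing contribution $|\alpha'|_{d_\beta}|\beta'|_{d_\alpha}=1$ forces $\binom{\alpha'\beta'}{d_\beta d_\alpha}\geq 1$. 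Equality throughout then forces $\binom{\alpha'\beta'}{d_\beta d_\alpha}=1$, the word $u$ to be supported on the arc $(d_\alpha, d_\beta]$, and $v$ on the complementary arc $(d_\beta, d_\alpha]$. Next, for any $(c_1,d_1)\in E_2\times E_1$, the quantities $\binom{\alpha\beta}{c_1 d_1}$, $\binom{\alpha'\beta'}{c_1 d_1}$, $|\alpha|_{c_1}$, $|\beta|_{d_1}$, $|\alpha'|_{c_1}$, $|\beta'|_{d_1}$ all vanish, and the analogous identity reduces to
\[
n + \sum_{x \in (d_1, c_1]}\bigl(|u|_x - |v|_x\bigr) \;=\; 0,
\]
which forces $v$ supported on $(d_1, c_1]$ and $u$ on the complement $(c_1, d_1]$.

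\emph{Cube-freeness delivers the contradiction.} Combining the two witnesses, $u$ must be supported on $(d_\alpha, d_\beta]\cap(c_1, d_1]$ and $v$ on $(d_\beta, d_\alpha]\cap(d_1, c_1]$, for every admissible $(c_1,d_1)\in E_2\times E_1$. Using that $\alpha',\beta'$ are cyclic intervals of $\mathbb{Z}/m\mathbb{Z}$ intersecting in exactly $T$, together with the disjointness $A\cap B=\emptyset$ and the nonemptiness of $E_1$ and $E_2$, I will select $(c_1,d_1)$ so that at least one of these two intersections collapses to a single letter. The corresponding word ($u$ of length $n+1\geq 4$ or $v$ of length $n\geq 3$) would then be a power of a single letter, which contradicts the cube-freeness of $\mathbf{t}_m$.

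The hard part will be the final stage: one must verify by a cyclic case analysis on the positions of $d_\alpha,d_\beta,E_1,E_2$ that the desired reduction to a singleton intersection is always achievable. The key resources are the nonemptiness of both $E_1$ and $E_2$, and the strong constraint that $\alpha'\cap\beta'=T$, $\alpha'\cup\beta'=\Sigma_m$, forcing the two intervals $\alpha',\beta'$ to wrap around the cycle in a specific complementary fashion relative to the arcs $(d_\alpha,d_\beta]$ and $(d_\beta,d_\alpha]$.
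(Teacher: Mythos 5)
Your first two stages are sound: the length count forcing $|\alpha|+|\beta|\le m-2$, the disjointness of $A=\mathrm{alph}(\alpha)$ and $B=\mathrm{alph}(\beta)$, the nonemptiness of $E_1$ and $E_2$ (indeed $|E_1|=m-|\beta'|$ and $|E_2|=m-|\alpha'|$), and both witness identities derived from \eqref{alpha:beta:c:d} all check out; the pair $(d_\beta,d_\alpha)$ correctly pins $u$ to $(d_\alpha,d_\beta]$ and $v$ to $(d_\beta,d_\alpha]$, and each $(c_1,d_1)\in E_2\times E_1$ pins $u$ to $(c_1,d_1]$ and $v$ to $(d_1,c_1]$. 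The genuine gap is exactly where you flag it: the assertion that some admissible $(c_1,d_1)$ collapses one of the two intersections to at most one letter is the decisive step and is only announced, not proved. For what it is worth, the claim is true and the case analysis is shorter than you fear. Since $\mathrm{alph}(\alpha')=T\cup E_1$ and $\mathrm{alph}(\beta')=T\cup E_2$ must be cyclic intervals while $T=A\sqcup B$ leaves two gaps $P=(d_\alpha,d_\beta)$ and $Q$ (between the end of $B$ and the start of $A$), one has: (i) if $P=\emptyset$ then $(d_\alpha,d_\beta]=\{d_\beta\}$ and the first witness alone kills $u$; (ii) if both gaps are nonempty then $\{E_1,E_2\}=\{P,Q\}$, and choosing $d_1=d_\alpha+1$ (when $E_1=P$) or $c_1=d_\beta-1$ (when $E_2=P$) reduces the support of $u$ to a singleton; (iii) if $Q=\emptyset$ then $E_1,E_2$ split the arc $P$ from its two ends, and either $(d_1,c_1]\subseteq P$ is disjoint from $(d_\beta,d_\alpha]$, making $v$ empty, or $(c_1,d_1]$ can be shrunk to a single letter. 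So the plan closes, but as submitted it is an outline rather than a proof.

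It is worth comparing with the paper's argument, which is far more economical and needs no support analysis. It writes $u=u'c$, takes any $d,e$ with $|\alpha|_d=|\beta|_e=1$, and reads off from \eqref{alpha:beta:c:d} that $\binom{\alpha\sigma_m(u)\beta}{de}\ge 3+3|u'|+\binom{|u'|}{2}$ while $\binom{\alpha'\sigma_m(v)\beta'}{de}\le\binom{\alpha'\beta'}{de}+3|u'|+\binom{|u'|}{2}$; this settles everything unless $\binom{\alpha'\beta'}{de}=3$, and in that residual case $[d,e]\subseteq A\cup B$ forces the last letter $y$ of $\alpha$ and the first letter $z$ of $\beta$ to be cyclically adjacent, whereupon cube-freeness of $\mathbf{t}_m$ makes the inequality for the pair $(y,z)$ strict. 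Your route buys more structural information (it actually locates the supports of $u$ and $v$), but at the cost of the multi-case cyclic analysis that the paper's two-line inequality avoids entirely.
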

\begin{proof}
	Let $u=u'c$ for some $c\in\Sigma_{m}$. If $\alpha\sigma_{m}(c)\beta\nsim_{1}\alpha'\beta'$, then $\alpha\sigma_{m}(u'c)\beta\nsim_{1}\alpha'\sigma_{m}(v)\beta'$. From now on, we assume that $\alpha\sigma_{m}(c)\beta\sim_{1}\alpha'\beta'$. Then for all $x\in\Sigma_m$, $|\alpha|_x+|\beta|_x=|\alpha\sigma_m(c)\beta|_x-1=|\alpha'\beta'|_x-1\leq 1$. Consequently, there exist $d,e\in\Sigma_m$ with $d\neq e$ such  that $|\alpha|_{d}=|\beta|_{e}=1$. Hence $|\beta|_d=|\alpha|_e=0$ and $|\alpha'|_{d}=|\beta'|_{d}=|\alpha'|_{e}=|\beta'|_{e}=1$.	By \eqref{alpha:beta:c:d},
	\begin{align*}
		\binom{\alpha\sigma_{m}(u)\beta}{de}
		&\geq \binom{\alpha\beta}{de}+|u|(|\alpha|_{d}+|\beta|_{e})+\binom{|u|}{2}
		=3+3|u'|+\binom{|u'|}{2}
	\end{align*}
	and 
	\begin{align*}
		\binom{\alpha'\sigma_{m}(v)\beta'}{de}
		&\leq \binom{\alpha'\beta'}{de}+3|v|+\binom{|v|}{2}
		=\binom{\alpha'\beta'}{de}+3|u'|+\binom{|u'|}{2}.
	\end{align*}
	Note that $\binom{\alpha'\beta'}{de}\leq 3$. If $\binom{\alpha'\beta'}{de}<3$, then $\binom{\alpha\sigma_{m}(u)\beta}{de}>\binom{\alpha'\sigma_{m}(v)\beta'}{de}$ and $\alpha\sigma_{m}(u)\beta\nsim_{2}\alpha'\sigma_{m}(v)\beta'$. If $\binom{\alpha'\beta'}{de}=3$, then $\binom{\alpha'}{de}=\binom{\beta'}{de}= 1$. For every $x\in[d,e]$, $|\alpha'|_{x}=|\beta'|_{x}=1$ and $|\alpha\beta|_x=1$ since $\alpha\sigma_{m}(c)\beta\sim_{1}\alpha'\beta'$. Then, it follows from $(\alpha,\beta)\in\mathcal{S}\times\mathcal{P}$ and $|\alpha|_{d}=|\beta|_{e}=1$ that there exists $y\in[d,e)$ such that $y\triangleright\alpha$ and $z\triangleleft\beta$ where $z\equiv (y+1)\pmod{m}$. As the word $\mathbf{t}_{m}$ is cube-free and $|u|>3$, we have $\sum_{x\in(z,y]}|u|_x=|u|-|u|_z>0$. By \eqref{alpha:beta:c:d},
	\[\binom{\alpha\sigma_{m}(u)\beta}{yz}
	> \binom{\alpha\beta}{yz}+|u|(|\alpha|_{y}+|\beta|_{z})+\binom{|u|}{2}
	=3+3|u'|+\binom{|u'|}{2}\geq \binom{\alpha'\sigma_{m}(v)\beta'}{yz}.\]
	Hence, $\alpha\sigma_{m}(u)\beta\nsim_{2}\alpha'\sigma_{m}(v)\beta'$.
\end{proof}

\section{$2$-binomial complexity of the generalized Thue-Morse word}
The aim of this section is to compute the $2$-binomial complexity of the generalized Thue-Morse word $\mathbf{t}_{m}$.  For every $u\in\Sigma_{m}^{*}$, the Parikh vector of $u$ is denoted by
\[\Psi(u):=(|u|_{0},|u|_{1},\cdots,|u|_{m-1}).\]
Write $\mathbf{1}:=(1,1,\dots,1)=\Psi(\sigma_m(0))$. For $x\in\Sigma_m$ and $\ell\in\mathbb{N}$, define $x|_{\ell}\in\Sigma_m^{\ell}$ as follow \[x|_{\ell}\equiv x(x+1)\dots(x+\ell-1)\pmod{m}.\]  If $\ell=0$, then $x|_{\ell}:=\varepsilon$. 

To compute the $2$-binomial complexity $b_{\mathbf{t}_{m},2}(n)$, we need some auxiliary lemmas.

	\begin{lemma}[Lemma 2 in \cite{CW19}]\label{bound:t}
		For the generalized Thue-Morse sequence $\mathbf{t}_{m}$ and every integer $n\geq 2$, we have
		$\partial{\mathcal{F}_{\mathbf{t}_{m}}}(n)=\Sigma_{m}^{2}.$
	\end{lemma}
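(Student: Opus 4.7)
The plan is to reduce the claim to an elementary question about base-$m$ digit sums. First I would prove, by induction on the number of base-$m$ digits of $i$ using $\mathbf{t}_m=\sigma_m(\mathbf{t}_m)$ together with the formula $\sigma_m(x)(r)\equiv x+r\pmod m$, the classical identity
\[
\mathbf{t}_m(i) \equiv s_m(i)\pmod m,
\]
where $s_m(i)$ denotes the sum of base-$m$ digits of $i$. Consequently, proving $\partial\mathcal{F}_{\mathbf{t}_m}(n) = \Sigma_m^2$ is equivalent to exhibiting, for every pair $(a,b)\in\Sigma_m^2$ and every $n\geq 2$, an index $i \geq 0$ with $s_m(i) \equiv a$ and $s_m(i+n-1) \equiv b$ modulo $m$.

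Next I would invoke the carry-counting identity $s_m(B+N) = s_m(B) + s_m(N) - (m-1)\,c(B,N)$, where $c(B,N)$ denotes the number of carries in the base-$m$ addition of $B$ and $N$. Reducing modulo $m$ (so that $-(m-1)\equiv 1$), this becomes
\[
s_m(i+n-1) - s_m(i) \equiv s_m(n-1) + c(i,n-1) \pmod m.
\]
The two conditions on $i$ then decouple: write $i = A\cdot m^K + B$ with $0\leq A<m$ and $K$ large enough that $B + n - 1 < m^K$. Then $s_m(i) \equiv A + s_m(B)\pmod m$ and $c(i,n-1) = c(B,n-1)$, so one may first pick $B$ to make $c(B,n-1)\pmod m$ equal the target residue $b-a-s_m(n-1)$, and then pick $A$ to force $s_m(i)\equiv a$.

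The only remaining point is to verify that $c(B,n-1)$ attains every residue modulo $m$ as $B$ varies. I would use the family $B_L = m^L - 1$ for $L$ with $n - 1 < m^L$: applying the identity above directly gives
\[
c(B_L, n-1) = L + \frac{s_m(n-1) - s_m(n-2) - 1}{m-1}.
\]
A short computation, based on how the digit sum behaves under incrementing by $1$, shows the numerator equals $-j(m-1)$, where $j$ is the number of trailing $(m-1)$'s in the base-$m$ expansion of $n-2$; hence $c(B_L, n-1) = L - j$ is linear in $L$ and visits every residue modulo $m$ as $L$ ranges over any $m$ consecutive admissible integers. This last computation is the only step that needs any care; everything else is a direct reduction from the statement to the decoupled system.
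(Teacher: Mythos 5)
Your argument is correct and complete. Note, however, that the paper does not prove this lemma at all: it is imported verbatim as Lemma~2 of \cite{CW19}, so there is no in-paper proof to compare against. What you have written is therefore a genuinely self-contained alternative justification, and a clean one: the reduction via $\mathbf{t}_m(i)\equiv s_m(i)\pmod m$ is the standard characterization of the generalized Thue--Morse word, the Legendre-type identity $s_m(B+N)=s_m(B)+s_m(N)-(m-1)c(B,N)$ correctly turns the two constraints into a decoupled system (since $-(m-1)\equiv 1\pmod m$), and the splitting $i=A\cdot m^K+B$ with $B+n-1<m^K$ legitimately isolates the choice of $A$ from the carry count. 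Your verification that $c(m^L-1,\,n-1)=L-j$, with $j$ the number of trailing digits $m-1$ of $n-2$, is also right (the case $n=2$, where $n-2=0$ and $j=0$, checks out), so $c(B,n-1)$ does hit every residue class modulo $m$. The only mild caveat is that your proof relies on the closed-form digit-sum description of $\mathbf{t}_m$, whereas the original source \cite{CW19} argues directly from the substitution structure; both routes are elementary, and yours has the advantage of reducing the whole statement to a one-line congruence once the carry identity is in place.
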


\begin{lemma}\label{0:a}
	Let $n=km$ with $k\geq 1$. Then for every $a\in\Sigma_{m}$, we have
	\[\sharp\{\Psi(au)\mid au\in\mathcal{F}_{\mathbf{t}_{m}}(n)\}=\sharp\{\Psi(ua)\mid ua\in\mathcal{F}_{\mathbf{t}_{m}}(n)\}=1+\frac{m(m-1)}{2}.\]
\end{lemma}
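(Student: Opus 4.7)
My plan is to use the natural $m$-block decomposition: every factor $au\in\mathcal{F}_{\mathbf{t}_{m}}(km)$ can be written uniquely as $au=\alpha\sigma_{m}(u')\beta$ with $(\alpha,\beta)\in\mathcal{S}\times\mathcal{P}$ and either $|\alpha|=|\beta|=0$ (so $|u'|=k$) or $|\alpha|+|\beta|=m$ with $|\alpha|\geq 1$ (so $|u'|=k-1$). Since $\sigma_{m}$ is Parikh-constant with $\Psi(\sigma_{m}(x))=\mathbf{1}$ for every $x\in\Sigma_{m}$, the Parikh vector is $k\mathbf{1}$ in the first case; in the second case, the requirement that $\alpha$ starts with $a$ forces $\alpha=a|_{\ell}$ where $\ell=|\alpha|$, and $\beta=d|_{m-\ell}$ for some $d\in\Sigma_{m}$. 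This yields
\[\Psi(au)=(k-1)\mathbf{1}+\Psi(a|_{\ell})+\Psi(d|_{m-\ell}).\]
Lemma \ref{bound:t} applied to factors of length $k+1\geq 2$ shows that for every $d\in\Sigma_{m}$ the pair $(c,d)$ with $c\equiv a+\ell\pmod{m}$ is the boundary of some factor of $\mathbf{t}_{m}$, so every $d$ is admissible.

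The next step is to convert the counting problem into a geometric one. Setting $I=\{a,\dots,a+\ell-1\}\pmod m$ and $J=\{d,\dots,d+m-\ell-1\}\pmod m$, two cyclic arcs of $\Sigma_{m}$, one has $\Psi(a|_{\ell})+\Psi(d|_{m-\ell})=\mathbf{1}_{I}+\mathbf{1}_{J}$. Writing $d\equiv a+\ell+s\pmod m$ with $s\in\{0,1,\dots,m-1\}$, the case $s=0$ yields $J=\overline{I}$, so $\mathbf{1}_{I}+\mathbf{1}_{J}=\mathbf{1}$ and $\Psi(au)=k\mathbf{1}$, coinciding with the first case above. For $s\neq 0$ the arcs overlap: the resulting vector equals $2$ on the cyclic arc $A:=I\cap J$, equals $0$ on the cyclic arc $B:=\overline{I\cup J}$, and equals $1$ elsewhere. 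Because $|I|+|J|=m$, one has $|A|=|B|=:t$, and the Parikh vector is uniquely determined by the pair $(A,B)$.

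Counting the distinct pairs $(A,B)$ then proceeds by a four-way case analysis on whether $s\leq\ell$ or $s>\ell$ and whether $\ell+s\leq m$ or $\ell+s>m$. In each sub-case $A$ and $B$ can be written explicitly in terms of $(\ell,s)$; for instance, when $s+\ell\leq m$ and $s\leq\ell$ one finds $A=\{a,\dots,a+s-1\}$ and $B=\{a+\ell,\dots,a+\ell+s-1\}$. Comparing the four sub-cases, one verifies that for each $t\in\{1,\dots,\lfloor m/2\rfloor\}$ exactly $2(m-2t)+1$ distinct $(A,B)$ pairs of common size $t$ arise, split as $m-2t+1$ pairs with $A$ anchored at $a$ and $m-2t$ pairs with $B$ ending at $a-1\pmod m$, and these two families are disjoint. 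Summing yields $\sum_{t=1}^{\lfloor m/2\rfloor}\bigl(2(m-2t)+1\bigr)=m(m-1)/2$, and adding the one vector $k\mathbf{1}$ from $s=0$ gives the claimed $1+m(m-1)/2$. The analogous identity for $\sharp\{\Psi(ua)\}$ follows from the symmetric decomposition in which $\beta$ (rather than $\alpha$) is forced by the terminal letter $a$.

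The main obstacle is the bookkeeping in the four-way case analysis: one must verify that the distinct Parikh vectors obtained at each size $t$ are exactly $2(m-2t)+1$ without omission or duplication. Particular care is needed at the boundary $t=m/2$ (when $m$ is even), where one of the two sub-families becomes empty and the formula collapses to $1$.
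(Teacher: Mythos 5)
Your proposal is correct, and it rests on exactly the same two pillars as the paper's proof: the $m$-block decomposition $au=\alpha\sigma_m(u')\beta$ with $\alpha=a|_{\ell}$, $\beta=d|_{m-\ell}$, and Lemma~\ref{bound:t} to guarantee that every terminal letter $d$ (equivalently every boundary pair of the length-$(k+1)$ "desubstituted" factor) actually occurs. Where you diverge is in the final count. The paper fixes $i=|\alpha|$, notes that each class $E_{i,b}$ is the singleton $\{\Psi(a|_i)+\Psi(b|_{m-i})+(k-1)\mathbf{1}\}$, and counts incrementally: $E_{i,b}$ collides with an earlier class exactly when $b-a-i\equiv j\pmod m$ for some $j\in[0,i-1]$, so $E_i$ contributes $m-i$ new vectors and the total is $1+\sum_{i=1}^{m-1}(m-i)$. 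You instead encode each vector by the pair of cyclic arcs $(A,B)=(I\cap J,\overline{I\cup J})$ and stratify by the common size $t=|A|=|B|$, obtaining $2(m-2t)+1$ vectors per level and the same total $1+\sum_{t=1}^{\lfloor m/2\rfloor}\bigl(2(m-2t)+1\bigr)=1+m(m-1)/2$. I checked your arc analysis: in the two sub-cases with $s+\ell\leq m$ one always gets $A=[a,a+t-1]$ with $B$ starting at each of $a+t,\dots,a+m-t$ (that is $m-2t+1$ pairs), in the two sub-cases with $s+\ell>m$ one gets $B=[a-t,a-1]$ with $A$ never anchored at $a$ (that is $m-2t$ pairs), and the degenerate level $t=m/2$ behaves as you say; the sums agree for both parities of $m$. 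The paper's bookkeeping is shorter and avoids the four-way case split, but your stratification by $t$ makes the multiset of Parikh vectors more explicit, which could be reused elsewhere. One cosmetic point: the decomposition of a factor $au$ need not be \emph{unique} as a word (it may occur at positions with different residues modulo $m$), but this is harmless since both you and the paper only need that the classes cover $\{w\in\mathcal{F}_{\mathbf{t}_m}(km)\mid a\triangleleft w\}$ and are each nonempty.
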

\begin{proof}
	For $i\in[0,m-1]$ and $b\in\Sigma_m$, write 
	\[\mathcal{E}_{i,b}:=\{\alpha\sigma_m(v)\beta\in\mathcal{F}_{\mathbf{t}_m}(n)\mid \alpha=a|_i, \beta=b|_{m-i}, v\in\mathcal{F}_{\mathbf{t}_m}(k-1), a\triangleleft \alpha\sigma_{m}(v)\}.\]
	By Lemma \ref{bound:t}, for every $i\in[0,m-1]$ and $b\in\Sigma_{m}$, there exists some $v\in\Sigma_{m}^{*}$ such that $\mathcal{E}_{i,b}\ne\emptyset$.
	Let $\mathcal{E}_i=\cup_{b\in\Sigma_m}\mathcal{E}_{i,b}$. Then $\{w\in\mathcal{F}_{\mathbf{t}_m}(n)\mid a\triangleleft w\}=\cup_{j=0}^{m-1}\mathcal{E}_j$. Note that 
	\begin{align*}
		E_{i,b}:=\{\Psi(w) \mid w\in\mathcal{E}_{i,b}\} & = \{\Psi(a|_i)+\Psi(b|_{m-i})+\Psi(\sigma_m(v))\mid v\in\mathcal{F}_{\mathbf{t}_m}(k-1)\}\\
		& = \{\Psi(a|_i)+\Psi(b|_{m-i})+(k-1)\mathbf{1}\}.
	\end{align*}
	Write $E_i:=\{\Psi(w)\mid w\in\mathcal{E}_i\}$. Then $\sharp E_0=1$ and for $i\in[1,m-1]$, $\sharp E_i=m$. Observe that for any $i\in[1,m-1]$ and $b\in\Sigma_m$, 
	\[\begin{cases}
		E_{i,b}=E_0, & \text{ if }b-a-i\equiv 0\pmod{m};\\
		E_{i,b}=E_{j,b}, & \text{ if }b-a-i\equiv j\in[1,i-1];\\
		E_{i,b}\cap\left(\cup_{\ell=0}^{i-1}E_{\ell}\right)=\emptyset, & \text{ otherwise}.
	\end{cases}\]
	So, for $i\in[1,m-1]$, $\sharp\left(E_i\setminus\left(\cup_{j=0}^{i-1}E_j\right)\right)=m-i$. Hence \[\sharp\{\Psi(au)\mid au\in\mathcal{F}_{\mathbf{t}_{m}}(n)\}=\sharp\left(\cup_{j=0}^{m-1}E_i\right)=1+(m-1)+(m-2)+\dots+1=1+\frac{m(m-1)}{2}.\]

	
	Applying a similar argument, one has $\sharp\{\Psi(ua)\mid ua\in\mathcal{F}_{\mathbf{t}_{m}}(n)\}=1+\frac{m(m-1)}{2}$.
\end{proof}

\begin{lemma}\label{a:r:a}
	Let $n=km+r$ with $k\geq 1$ and $1\leq r\leq m-1$. Then for every $a\in\Sigma_{m}$, we have
	\[\sharp\{\Psi(au)\mid au\in\mathcal{F}_{\mathbf{t}_{m}}(n)\}=\sharp\{\Psi(ua)\mid ua\in\mathcal{F}_{\mathbf{t}_{m}}(n)\}=1+\frac{m(m-1)}{2}.\]
\end{lemma}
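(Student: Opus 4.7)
The plan is to follow the strategy of the proof of Lemma~\ref{0:a}, adapted to the case $n \equiv r \pmod m$ with $r \neq 0$.

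First, I decompose each factor $au \in \mathcal{F}_{\mathbf{t}_m}(n)$ uniquely as $au = \alpha\sigma_m(v)\beta$ with $(\alpha,\beta) \in \mathcal{S}\times\mathcal{P}$ and $v$ a factor of $\mathbf{t}_m$. The length equation $|\alpha| + m|v| + |\beta| = km + r$ yields two cases: in \textbf{Case A}, $|\alpha| + |\beta| = r$ and $|v| = k$, with $|\alpha| \in [0, r]$; in \textbf{Case B}, $|\alpha| + |\beta| = m+r$ and $|v| = k-1$, with $|\alpha| \in [r+1, m-1]$ (Case~B being empty when $r = m-1$). Since $a$ is the first letter, when $|\alpha| = i \geq 1$ one has $\alpha = a|_i$, whereas when $|\alpha| = 0$ (possible only in Case~A) one only needs $v(0) = a$. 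The prefix $\beta$, when non-empty, equals $b|_{|\beta|}$ for a free $b \in \Sigma_m$. Lemma~\ref{bound:t} ensures each such configuration is realized: the associated boundary pair $(c,d)$ attached to $(\alpha,\beta)$ always appears as the boundary of some factor of $\mathbf{t}_m$ at the desubstituted level. Since $\sigma_m$ is Parikh-constant with $\Psi(\sigma_m(x)) = \mathbf{1}$, we have
\[\Psi(au) = |v|\,\mathbf{1} + \Psi(a|_i) + \Psi(b|_{|\beta|}).\]
For $i \in [0, m-1]$, set $E_i := \{\Psi(au) : |\alpha|=i\}$; the desired quantity is $\bigl|\bigcup_{i=0}^{m-1} E_i\bigr|$.

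Evaluating this union is where I expect the main work to lie. I proceed by a telescoping argument analogous to Lemma~\ref{0:a}. First, $|E_0| = m$, because the $m$ arcs $b|_r$ of length $r < m$ produce $m$ distinct Parikh vectors (in contrast to Lemma~\ref{0:a}, where $\beta = b|_m$ gave Parikh vector $\mathbf{1}$ for every $b$). For $i \geq 1$, I analyze when two configurations $(i, b)$ and $(i', b')$, possibly in different cases, produce the same $\Psi(au)$: this reduces to a shift relation of the form $b - i \equiv b' - i' \pmod m$ together with a matching of the arc-length pair, by directly computing $\Psi(a|_i) + \Psi(b|_{|\beta|})$ as the characteristic multiset of two consecutive arcs of $\mathbb{Z}/m$. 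Carefully tracking the newly appearing vectors at each step, I expect $\sum_{i=1}^{m-1}\bigl|E_i \setminus \bigcup_{j<i} E_j\bigr| = \binom{m-1}{2}$, and hence the total is $m + \binom{m-1}{2} = 1 + \frac{m(m-1)}{2}$.

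The statement for $\sharp\{\Psi(ua) : ua \in \mathcal{F}_{\mathbf{t}_m}(n)\}$ follows by a symmetric argument: in the decomposition $ua = \alpha\sigma_m(v)\beta$, the condition that $a$ be the last letter forces $\beta$ (when non-empty) to have the shape $b|_{|\beta|}$ with $b \equiv a - |\beta| + 1 \pmod m$, while $\alpha = c|_{|\alpha|}$ has a free first letter $c \in \Sigma_m$. The same telescoping argument yields the same cardinality.
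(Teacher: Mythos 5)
Your framework coincides with the paper's: the same two-case decomposition according to $|\alpha|+|\beta|\in\{r,m+r\}$ (your Cases A and B are exactly the paper's families $\mathcal{E}_{i,b}$, with $|v|=k$, and $\mathcal{D}_{i,b}$, with $|v|=k-1$), the same formula $\Psi(au)=|v|\mathbf{1}+\Psi(a|_i)+\Psi(b|_{|\beta|})$, the same appeal to Lemma \ref{bound:t} for realizability, and your intermediate counts ($\sharp E_0=m$, plus $\binom{m-1}{2}$ new vectors from $i\geq 1$) are numerically consistent with the paper's tally $1+\sum_{i=1}^{m-1}(m-i)$. The problem is that the proof stops exactly where the content of the lemma begins: the coincidence analysis is announced with ``I expect'' rather than carried out, and the invariant you propose for detecting collisions is not the correct one. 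The actual coincidences are adjacency conditions between the two arcs, e.g.\ $E_{i,b}=E_{0,a}$ when $b\equiv a+i\pmod m$ (the arc $\beta$ begins where the arc $\alpha$ ends, so the multiset union degenerates to the single arc $a|_r$), and $E_{i,b}=E_{j,b}$ \emph{with the same} $b$ when $j\equiv b+r-i-a\pmod m$; these are not captured by a relation of the form $b-i\equiv b'-i'\pmod m$. Without listing these conditions and verifying that all remaining pairs of configurations give distinct vectors (which is where the hypothesis that the arcs have length $<m$, via Lemma \ref{alpha:beta:equal}, is actually used), the value of $\sum_{i\geq1}\sharp\bigl(E_i\setminus\bigcup_{j<i}E_j\bigr)$ is unsupported.

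A second, related omission concerns the cross-case collisions, which you acknowledge (``possibly in different cases'') but do not treat. A Case B vector carries the offset $(k-1)\mathbf{1}$ while a Case A vector carries $k\mathbf{1}$, and the two Case B arcs have total length $m+r$; so a Case B configuration collides with a Case A one precisely when the two Case B arcs overlap in a way that produces an extra full $\mathbf{1}$, which happens for exactly $r+1$ of the $m$ choices of $b$ at each level (this is the paper's condition $D_{i,b}=E_{r-j,b}$ for $j\in[0,r]$ with $b\equiv a+m-i-j\pmod m$). This bookkeeping is what makes each Case B level contribute $m-r-i$ rather than $m$ new vectors, and it is needed to reach the total $1+\frac{m(m-1)}{2}$; as written, your argument does not establish it.
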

\begin{proof}
	For $i\in[0,r]$ and $b\in\Sigma_m$, write \[\mathcal{E}_{i,b}:=\{\alpha\sigma_m(v)\beta\in\mathcal{F}_{\mathbf{t}_m}(n)\mid \alpha=a|_{i}, \beta=b|_{r-i}, v\in\mathcal{F}_{\mathbf{t}_m}(k), a\triangleleft \alpha\sigma_m(v)\}\]and $\mathcal{E}_i:=\cup_{b\in\Sigma_m}\mathcal{E}_{i,b}$. By Lemma \ref{bound:t}, $\mathcal{E}_{i,b}\ne \emptyset$ for every $b\in\Sigma_{m}$ and $i\in[0,m-1]$. Let $E_i=\cup_{b\in\Sigma_m}E_{i,b}$ where  
	\[E_{i,b}:=\{\Psi(w)\mid w\in\mathcal{E}_{i,b}\}=\{\Psi(a|_{i})+\Psi(b|_{r-i})+k\mathbf{1}\}.\]
	Moreover, for $i\in[0,r-1]$, $\sharp E_i=m$ and $\sharp E_r=1$. Observe that $E_r\subset E_0$ and for $i\in[0,r-1]$, 
	\[\begin{cases}
		E_{i,b}=E_{0,a}, & \text{if }b\equiv a+i\pmod{m};\\
		E_{i,b}=E_{j,b}, & \text{if }b+r-i-a\equiv j \in[0,i-1];\\
		E_{i,b}\cap\left(\cup_{\ell=0}^{i-1}E_{\ell}\right)=\emptyset, & \text{otherwise}.
	\end{cases}\]
	Thus, for $i\in[1,r-1]$, $\sharp\left(E_i\setminus\cup_{\ell=0}^{i-1}E_{\ell}\right)=m-i-1$. So, $\sharp\left(\cup_{i=0}^{r}E_i\right)=1+\sum_{i=1}^{r}(m-i)$.
	
	For $i\in[1,m-r-1]$ and $b\in\Sigma_m$, write \[\mathcal{D}_{i,b}:=\{\alpha\sigma_m(v)\beta\in\mathcal{F}_{\mathbf{t}_m}(n)\mid \alpha=a|_{m-i}, \beta=b|_{r+i}, v\in\mathcal{F}_{\mathbf{t}_m}(k-1)\}\] and $\mathcal{D}_i:=\cup_{b\in\Sigma_m}\mathcal{D}_{i,b}$. By Lemma \ref{bound:t}, we have $\mathcal{D}_{i,b}\ne\emptyset$ for all $b\in\Sigma_{m}$ and $i\in[1,m-r-1]$. Let $D_i=\cup_{b\in\Sigma_m}D_{i,b}$ where  
	\[D_{i,b}:=\{\Psi(w)\mid w\in\mathcal{D}_{i,b}\}=\{\Psi(a|_{m-i})+\Psi(b|_{r+i})+(k-1)\mathbf{1}\}.\]
	Note that $\sharp D_i=m$. Further, for $i\in[1,m-r-1]$ and $j\in\Sigma_m$, letting $b\equiv a+m-i-j\pmod{m}$, we have 
	\[\begin{cases}
		D_{i,b} = E_{r-j,b}, & \text{if } j\in[0,r];\\
		D_{i,b} = D_{j-r,b}, & \text{if }j\in[r+1,r+i-1];\\
		D_{i,b}\cap\left[\bigl(\cup_{\ell=0}^{r}E_{\ell}\bigr)\cup\bigl(\cup_{s=1}^{i-1}D_s\bigl)\right]=\emptyset, & \text{if }j\in[r+i,m-1].
	\end{cases}\]
	Since $\{\Psi(au)\mid au\in\mathcal{F}_{\mathbf{t}_{m}}(n)\}=\bigl(\cup_{i=0}^{r}E_i\bigr)\cup\bigl(\cup_{i=1}^{m-r-1}D_i\bigr)$, we obtain that  	
	\[\sharp\left[\bigl(\cup_{i=0}^{r}E_i\bigr)\cup\bigl(\cup_{i=1}^{m-r-1}D_i\bigr)\right]=1+\sum_{i=1}^{r}(m-i)+\sum_{i=r+1}^{m-1}(m-i)=1+\frac{m(m-1)}{2}.\]
	
	Applying a similar argument, one has $\sharp\{\Psi(ua)\mid ua\in\mathcal{F}_{\mathbf{t}_{m}}(n)\}=1+\frac{m(m-1)}{2}$.
\end{proof}

\begin{lemma}\label{a:b:n}
	For every $a,b\in\Sigma_{m}$ and $n\in\mathbb{N}$ with $n\geq m+1$, we have 
	\[\sharp\{\Psi(aub)\mid aub\in\mathcal{F}_{\mathbf{t}_{m}}(n)\}=\begin{cases}
		1,~\text{if}~n\equiv b-a+1 \pmod m;\\
		m,~\text{otherwise}.
	\end{cases}\]
\end{lemma}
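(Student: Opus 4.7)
The plan is to parametrize factors by the decomposition $aub = \alpha\sigma_m(v)\beta$ with $(\alpha,\beta) \in \mathcal{S} \times \mathcal{P}$. Writing $n = km + r$ with $0 \le r \le m-1$ and setting $i = |\alpha|$, $j = |\beta|$, we have either $i + j = r$ with $|v| = k$ (Case A) or $i + j = r + m$ with $|v| = k-1$ (Case B). Since $aub$ begins with $a$ and ends with $b$, the shapes are forced: $\alpha = a|_i$ and $\beta = (b-j+1)|_j$ (with the convention $x|_0 = \varepsilon$). For each legal $(i,j)$, at least one such factor exists by Lemma \ref{bound:t}. Hence the count reduces to counting the distinct values of
\[
\Psi(\alpha) + \Psi(\beta) + |v|\,\mathbf{1}
\]
as $(i,j)$ ranges over Case A and Case B.

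Set $d \equiv (b - a + 1) - r \pmod m$; the condition $n \equiv b - a + 1 \pmod m$ is exactly $d \equiv 0$. When $d \equiv 0$, in Case A the cyclic arcs $[a, a+i-1]$ and $[b-j+1, b]$ are adjacent and tile $[a, a+r-1]$, giving $\Psi(\alpha) + \Psi(\beta) = \Psi(a|_r)$; in Case B the same two arcs wrap the cycle once and overlap on $[a, a+r-1]$, giving $\Psi(a|_r) + \mathbf{1}$. Combined with $|v| = k$ in Case A and $|v| = k - 1$ in Case B, both cases collapse to the single vector $\Psi(a|_r) + k\,\mathbf{1}$, and the count is $1$.

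When $d \not\equiv 0$, I will show that all $(r+1) + (m - r - 1) = m$ candidate vectors are distinct. Within Case A, replacing $(i,j)$ by $(i+1,j-1)$ adds the letter $a+i$ to $\alpha$ and removes the letter $b - j + 1 \equiv a + i + d \pmod m$ from $\beta$, so consecutive Parikh vectors differ by $\Psi(a+i) - \Psi(a+i+d)$ (where $\Psi$ of a single letter denotes the corresponding unit vector); the same telescoping works in Case B. Any two same-case vectors thus differ by $\sum_{\ell=i_1}^{i_2-1}\bigl(\Psi(a+\ell) - \Psi(a+\ell+d)\bigr)$, which is nonzero because two cyclic arcs of equal length at most $m-1$ in $\mathbb{Z}/m\mathbb{Z}$ can coincide only when shifted by a multiple of $m$, contradicting $d \not\equiv 0$. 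For the cross-case comparison between $i' \in [0,r]$ (Case A) and $i \in [r+1, m-1]$ (Case B), I will use that $\beta$ in Case B has length $r + m - i$, wraps the cycle once, and has complement equal to the arc $(a+r+d)|_{i-r}$, so $\Psi(\beta_B) = \mathbf{1} - \Psi\bigl((a+r+d)|_{i-r}\bigr)$. After the two $\mathbf{1}$'s arising from the difference of the constant parts cancel and the two adjacent short arcs $(a+i'+d)|_{r-i'}$ and $(a+r+d)|_{i-r}$ merge, one obtains
\[
P_A(i') - P_B(i) = \Psi\bigl((a+i'+d)|_{i-i'}\bigr) - \Psi\bigl((a+i')|_{i-i'}\bigr),
\]
which vanishes only when a length-$(i-i')$ cyclic arc equals its shift by $d$ -- impossible for $d \not\equiv 0$ and $i - i' \le m - 1$.

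The hard part will be Case B: since $\beta$ wraps the cycle, its Parikh vector must be rewritten as $\mathbf{1}$ minus the indicator of a short complementary arc before the cross-case difference telescopes cleanly. Once that reformulation is in place, all distinctness statements reduce to the elementary fact that two equal-length cyclic arcs of length $< m$ in $\mathbb{Z}/m\mathbb{Z}$ are equal iff they share a starting point.
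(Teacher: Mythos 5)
Your proof is correct and takes essentially the same route as the paper: both decompose $aub=\alpha\sigma_m(v)\beta$, use $\Psi(\sigma_m(v))=|v|\mathbf{1}$ to reduce the count to the $m$ admissible cut positions $(|\alpha|,|\beta|)$, and check that the candidate Parikh vectors collapse to a single one precisely when $n\equiv b-a+1\pmod m$; your telescoping cyclic-arc argument merely makes explicit the pairwise distinctness that the paper asserts without detail. (One harmless slip: in Case B the word $\beta$ has length $r+m-i<m$ and does not itself wrap the cycle, but the identity $\Psi(\beta_B)=\mathbf{1}-\Psi\bigl((a+r+d)|_{i-r}\bigr)$ that you actually use is nevertheless correct, since that arc is the complement of $\beta_B$ in $\mathbb{Z}/m\mathbb{Z}$.)
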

\begin{proof}
	Fix $a$, $b\in\Sigma_m$. Suppose $n=km$ for some $k\geq 2$. Adopt the notations in the proof of Lemma \ref{0:a}. Recall that $\{w\in\mathcal{F}_{\mathbf{t}_m}(n)\mid a\triangleleft w\}=\cup_{i=0}^{m-1}\mathcal{E}_i$. Note that for $i\in[0,m-1]$, $\{w\in\mathcal{E}_i\mid b\triangleright w\}=\mathcal{E}_{i,b+i+1}$ and \[E_{i,b+i+1}=\{\Psi(w)\mid w\in\mathcal{E}_{i,b+i+1}\}=\{\Psi(a|_i)+\Psi((b+i+1)|_{m-i})+(k-1)\mathbf{1}\}.\] If $b+i+1\equiv a+i\pmod{m}$, then for all $i\in[0,m-1]$, $E_{i,b+i+1}=\{k\mathbf{1}\}$. Consequently, $\sharp\{\Psi(aub)\mid aub\in\mathcal{F}_{\mathbf{t}_{m}}(n)\}=\sharp E_{0,a}=1$. If $b+i+1\not\equiv a+i\pmod{m}$, then for any $i,j\in[0,m-1]$ with $i\neq j$, $E_{i,b+i+1}\neq E_{j,b+j+1}$. In this case, $\sharp\{\Psi(aub)\mid aub\in\mathcal{F}_{\mathbf{t}_{m}}(n)\}=\sharp\left(\cup_{i=0}^{m-1}E_{i,b+i+1}\right)=m$.
	
	Now suppose $n=km+r$ where $k\geq 1$ and $1\leq r<m$. Adopt the notations in the proof of Lemma \ref{a:r:a}. Recall that $\{w\in\mathcal{F}_{\mathbf{t}_m}(n)\mid a\triangleleft w\}=\bigl(\cup_{i=0}^{r}\mathcal{E}_i\bigr)\cup\bigl(\cup_{i=1}^{m-r-1}\mathcal{D}_i\bigr)$. Again, for $i\in[0,r]$, $\{w\in\mathcal{E}_i\mid b\triangleright w\}=\mathcal{E}_{i,b-r+i+1}$ and for $j\in[1,m-r-1]$, $\{w\in\mathcal{D}_i\mid b\triangleright w\}=\mathcal{D}_{i,b-r-i+1}$. Moreover, \[\{\Psi(aub)\mid aub\in\mathcal{F}_{\mathbf{t}_{m}}(n)\}=\bigl(\cup_{i=0}^{r}E_{i,b-r+i+1}\bigr)\cup\bigl(\cup_{i=1}^{m-r-1}D_{i,b-r-i+1}\bigr).\]
	If $b-a+1\equiv r\pmod{m}$, then for all $i\in[0,r]$ and $j\in[1,m-r-1]$, \[E_{i,b-r+i+1}=D_{j,b-r-j+1}=\{\Psi(a|_{r})+k\mathbf{1}\}.\] Therefore, $\sharp\{\Psi(aub)\mid aub\in\mathcal{F}_{\mathbf{t}_{m}}(n)\}=1$.	If $b-a+1\not\equiv r\pmod{m}$, then 
	\[E_{0,b-r+1}, ~E_{1,b-r+2}, ~\dots, ~E_{r,b+1},~ D_{1,b-r},~ D_{2,b-r-1}, ~\dots, ~D_{m-r-1,b-m+2}~\text{are~pairwise~disjoint}.\] 
	 Thus $\sharp\{\Psi(aub)\mid aub\in\mathcal{F}_{\mathbf{t}_{m}}(n)\}=m.$
\end{proof}
Recall that for $u\in\Sigma_m^*$, the Parikh vector and the extended Parikh vector of $u$ are denoted by $\Psi(u)$ and $\Psi_{k}(u)$ respectively. The following two theorems give the accurate value of $b_{\mathbf{t}_{m},2}(n)$ for every $n\geq m^{2}$. 
\begin{theorem}\label{2:n:0}
	For every $k\geq m$, we have
	\[b_{\mathbf{t}_{m},2}(km) = b_{\mathbf{t}_{m},1}(k) + m(m-1)[m(m-1)+1]\]
	where $b_{\mathbf{t}_{m},1}(\cdot)$ denotes  the abelian complexity function of the infinite word $\mathbf{t}_{m}$.
\end{theorem}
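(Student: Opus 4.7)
The plan is to use Theorem \ref{main:result} to reduce the count of $2$-binomial classes of length $km$ to the count of triples consisting of a boundary datum $(\alpha,\beta)\in\mathcal{S}\times\mathcal{P}$ and an abelian class of a pre-image word. First I would observe that every factor $u\in\mathcal{F}_{\mathbf{t}_m}(km)$ admits a decomposition $u=\alpha\sigma_m(u')\beta$ with $(\alpha,\beta)\in\mathcal{S}\times\mathcal{P}$, arising from the alignment of $u$ with the $\sigma_m$-blocks of $\mathbf{t}_m=\sigma_m(\mathbf{t}_m)$. Since $|u|=km$ and $|\alpha|,|\beta|<m$, either $\alpha=\beta=\varepsilon$ (with $|u'|=k$), or $|\alpha|=i$ with $i\in[1,m-1]$ and $|\beta|=m-i$ (with $|u'|=k-1$). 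Uniqueness of this decomposition follows by applying the only-if direction of Theorem \ref{main:result} to $u\sim_2 u$, so the map $[u]_{\sim_2}\mapsto(\alpha,\beta,[u']_{\sim_1})$ is a well-defined bijection onto the realized triples.

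Next I would split the count by boundary type. When $\alpha=\beta=\varepsilon$, the factor is $\sigma_m(u')$ for some $u'\in\mathcal{F}_{\mathbf{t}_m}(k)$, contributing exactly $b_{\mathbf{t}_m,1}(k)$ abelian classes. In the nondegenerate case, for each $i\in[1,m-1]$ and each $(a,b)\in\Sigma_m^2$, the suffix $\alpha$ of $\sigma_m(a)$ of length $i$ and the prefix $\beta$ of $\sigma_m(b)$ of length $m-i$ are uniquely determined, and $\alpha\sigma_m(u')\beta\in\mathcal{F}_{\mathbf{t}_m}(km)$ if and only if $au'b\in\mathcal{F}_{\mathbf{t}_m}(k+1)$. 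Since $\Psi(u')=\Psi(au'b)-\Psi(a)-\Psi(b)$ with $a,b$ fixed, the abelian classes of such $u'$ are in bijection with $\{\Psi(au'b):au'b\in\mathcal{F}_{\mathbf{t}_m}(k+1)\}$.

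Then I would invoke Lemma \ref{a:b:n} with $n=k+1\geq m+1$: this cardinality equals $1$ if $k\equiv b-a\pmod{m}$ and $m$ otherwise. For each $a\in\Sigma_m$ there is exactly one $b$ satisfying the congruence, so summing over $(a,b)\in\Sigma_m^2$ gives $m\cdot 1+m(m-1)\cdot m=m[m(m-1)+1]$. Multiplying by the $m-1$ choices of $i$ yields $m(m-1)[m(m-1)+1]$, which combined with the degenerate contribution $b_{\mathbf{t}_m,1}(k)$ matches the claim.

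The main subtlety is verifying that the parameterization is a genuine bijection on $2$-binomial classes, which requires the hypothesis $|u'|\geq 3$ in Theorem \ref{main:result} to hold. The degenerate case gives $|u'|=k\geq m\geq 3$, and the nondegenerate case gives $|u'|=k-1\geq m-1$, which is at least $3$ when $m\geq 4$; the borderline situation $m=k=3$ (where $|u'|=2$) would require a small separate verification, and a secondary task is confirming that the bijection $\alpha\sigma_m(u')\beta\leftrightarrow au'b$ at the level of factors is faithful, which uses that $a$ and $b$ are unambiguously recovered from $\alpha$ and $\beta$ via the definition of $\sigma_m$.
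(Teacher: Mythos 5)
Your proposal is correct and follows essentially the same route as the paper: decompose $\mathcal{F}_{\mathbf{t}_m}(km)$ into the block-aligned factors $\sigma_m(u')$ and those with $|\alpha|+|\beta|=m$, use Theorem \ref{main:result} to identify $2$-binomial classes with triples $(\alpha,\beta,[u']_{\sim_1})$, and count the nondegenerate part via Lemma \ref{a:b:n} applied at length $k+1$, giving $m[1+m(m-1)]$ per choice of $i$. Your remark about the borderline case $m=k=3$ (where $|u'|=2$ falls below the hypothesis of Theorem \ref{main:result}) is a legitimate point that the paper's own proof also passes over silently.
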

\begin{proof}
	Observe that $\mathcal{F}_{\mathbf{t}_{m}}(km)=F_1\cup F_2$ with $F_1\cap F_2=\emptyset$ where 
	\begin{align*}
		F_{1} & := \{\sigma_{m}(v)\mid v\in\mathcal{F}_{\mathbf{t}_{m}}(k)\},\\
		F_{2} & := \{\alpha\sigma_{m}(v)\beta\in\mathcal{F}_{\mathbf{t}_{m}}(km) \mid (\alpha,\beta)\in\mathcal{S}\times\mathcal{P}, |\alpha|+|\beta|=m,v\in\mathcal{F}_{\mathbf{t}_m}(k-1)\}.
	\end{align*}
	Applying Theorem \ref{main:result}, we have  
	\begin{align}
		b_{\mathbf{t}_{m},2}(km) & = \sharp\{\Psi_{2}(u)\mid u\in\mathcal{F}_{\mathbf{t}_{m}}(km)\}\nonumber\\
		& = \sharp\{\Psi_{2}(u)\mid u\in F_{1}\}+\sharp\{\Psi_{2}(u)\mid u\in F_{2}\}.\label{thm:2-1}
	\end{align}
	Using Theorem \ref{main:result} again, for $u=\sigma_m(v)\in F_1$ and $u'=\sigma_m(v')\in F_1$,  $u\sim_2 u'$ if and only if $v\sim_1 v'$, i.e., $\Psi_{2}(u)=\Psi_{2}(u')\iff\Psi(v)=\Psi(v')$.
	Therefore, 
	\begin{equation}\label{thm:2-2}
		\sharp\{\Psi_{2}(u)\mid u\in F_1\}=\sharp\{\Psi(v)\mid v\in\mathcal{F}_{\mathbf{t}_m}(k)\}=b_{\mathbf{t}_{m},1}(k).
	\end{equation}
	
	For $i\in[1,m-1]$, write \[F_{2,i,a,b}:=\{\alpha\sigma_m(v)\beta\in F_2\mid \alpha=a|_i,b\triangleright\beta,v\in\mathcal{F}_{\mathbf{t}_m}(k-1)\}.\] Then 
	$F_2=\cup_{i=1}^{m-1}\cup_{a,b\in\Sigma_m}F_{2,i,a,b}$. 
	According to Theorem \ref{main:result}, 
	\begin{align*}
		& \sharp\{\Psi_{2}(w)\mid w\in F_{2,i,a,b}\} \\
		= & \sharp\{\Psi(cvd)\mid cvd\in\mathcal{F}_{\mathbf{t}_{m}}(k+1),~c\equiv a+i,~ d\equiv b+i+1~(\bmod~m)\}\\
		= & \begin{cases}
			1, & \text{if }b\equiv a+k-1\pmod{m};\\
			m, & \text{otherwise.}
		\end{cases} \tag{\text{by Lemma \ref{a:b:n}}}
	\end{align*}
	Using Theorem \ref{main:result} again, if $(i,a,b)\neq (i',a',b')$, then for any $u\in F_{2,i,a,b}$ and $u'\in F_{2,i',a',b'}$, we have $u\nsim_2 u'$. So, 
	\begin{align}
		\sharp\{\Psi_{2}(u)\mid u\in F_{2}\} & = \sum_{i=1}^{m-1}\sum_{a,b\in\Sigma_m}\sharp\{\Psi_{2}(w)\mid w\in F_{2,i,a,b}\}\nonumber\\
		& = (m-1)m[1+(m-1)m].\label{thm:2-3}
	\end{align}
	Then the result follows from \eqref{thm:2-1}, \eqref{thm:2-2} and \eqref{thm:2-3}.
\end{proof}

\begin{theorem}\label{2:n:r}
	For every $n\geq m^{2}$ with $n\not\equiv 0 \pmod m$, we have
	\[b_{\mathbf{t}_{m},2}(n)=m^{4}-2m^{3}+2m^{2}.\]
\end{theorem}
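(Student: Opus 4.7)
The plan is to mimic the strategy of the proof of Theorem \ref{2:n:0}, adapted to the non-multiple-of-$m$ case. Write $n=km+r$ with $r\in\{1,\dots,m-1\}$; the hypothesis $n\geq m^2$ forces $k\geq m\geq 3$. Every factor of length $n$ admits at least one parsing $u=\alpha\sigma_m(u')\beta$ with $(\alpha,\beta)\in\mathcal{S}\times\mathcal{P}$, and since $m\nmid n$ only two parsing shapes occur: either $|\alpha|+|\beta|=r$ with $|u'|=k$ (call this set $F_1$), or $|\alpha|+|\beta|=m+r$ with $|u'|=k-1$ (call it $F_2$). For any $v\in F_2$ the constraints $|\alpha'|,|\beta'|<m$ force $|\alpha'|,|\beta'|\geq r+1\geq 2$, so for every $u\in F_1$ the appropriate one among Propositions \ref{a:b:empty}, \ref{alpha:0}, \ref{beta:0}, \ref{a:b:a':b'} (according to which of $|\alpha|,|\beta|$ vanish) gives $u\nsim_2 v$. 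Consequently $F_1\cap F_2=\emptyset$ and
\[
b_{\mathbf{t}_m,2}(n)=\sharp(F_1/\sim_2)+\sharp(F_2/\sim_2).
\]

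Within each $F_j$, Theorem \ref{main:result} puts the $\sim_2$-classes in bijection with admissible triples $(\alpha,\beta,[u']_{\sim_1})$, so it suffices to count such triples. I would decompose $F_1$ by $i:=|\alpha|\in\{0,1,\dots,r\}$. The two boundary regimes $i=0$ and $i=r$ each give $m$ admissible pairs $(\alpha,\beta)$ (the non-empty word among $\alpha,\beta$ is determined by its unique boundary letter); for each, the abelian classes of $u'\in\mathcal{F}_{\mathbf{t}_m}(k)$ making $\alpha\sigma_m(u')\beta$ a factor correspond bijectively to Parikh classes of $u'b\in\mathcal{F}_{\mathbf{t}_m}(k+1)$ (or $au'\in\mathcal{F}_{\mathbf{t}_m}(k+1)$), giving $1+m(m-1)/2$ classes by Lemma \ref{0:a} or Lemma \ref{a:r:a} (depending on whether $m\mid k+1$). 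The interior regime $0<i<r$ has $m^2$ pairs per $i$, parameterized by $(a,b)\in\Sigma_m^2$; Lemma \ref{a:b:n} applied to $au'b\in\mathcal{F}_{\mathbf{t}_m}(k+2)$ gives $1$ class when $b-a+1\equiv k+2\pmod m$ (exactly $m$ of the $m^2$ pairs) and $m$ classes otherwise, so the per-$i$ total is $m+m\cdot m(m-1)=m(m^2-m+1)$. The analogous count for $F_2$ over $i\in\{r+1,\dots,m-1\}$ uses Lemma \ref{a:b:n} with factor length $k+1$ in place of $k+2$, yielding the same per-$i$ total.

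Summing, the two boundary regimes contribute $2m\bigl(1+m(m-1)/2\bigr)=2m+m^2(m-1)$, while the interior regimes of $F_1$ and $F_2$ together span $(r-1)+(m-1-r)=m-2$ values of $i$, each worth $m(m^2-m+1)$; hence
\[
b_{\mathbf{t}_m,2}(n)=2m+m^2(m-1)+(m-2)m(m^2-m+1)=m^4-2m^3+2m^2,
\]
independently of $r$, as claimed.

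The principal obstacle is the bookkeeping: correctly parameterizing each $(\alpha,\beta)$ either by its sole non-trivial boundary letter (boundary regime) or by $(a,b)\in\Sigma_m^2$ (interior regime), and invoking the right factor length in Lemma \ref{a:b:n} ($k+2$ for the $F_1$-interior, $k+1$ for $F_2$, $k+1$ for the boundary). A secondary subtlety is the borderline case $m=3$, $k=3$ (so $n\in\{10,11\}$): the inner word $u'$ of an $F_2$-parsing has length $2$ and narrowly fails the hypothesis $\min\{|u'|,|v'|\}\geq 3$ of Theorem \ref{main:result}, which would need either a direct verification of the $\sim_2$-equivalence on that finite data set or a mild extension of Theorem \ref{main:result}'s hypothesis.
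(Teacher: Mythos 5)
Your proposal is correct and follows essentially the same route as the paper: the paper's proof partitions $\mathcal{F}_{\mathbf{t}_m}(n)$ into the same pieces (its $S_1,S_2,S_3,S_4$ are exactly your two boundary regimes, your $F_1$-interior, and your $F_2$), applies Theorem \ref{main:result} to reduce each piece to counting triples $(\alpha,\beta,[v]_{\sim_1})$, and invokes Lemmas \ref{0:a}, \ref{a:r:a} and \ref{a:b:n} with the same factor lengths, arriving at the identical sum $2m\bigl(1+m(m-1)/2\bigr)+(m-2)m\bigl(1+m(m-1)\bigr)=m^4-2m^3+2m^2$. The borderline case you flag ($m=3$, $k=3$, where the inner word of an $F_2$-parsing has length $2<3$) is genuine but is equally present and left unaddressed in the paper's own proof, so your treatment is, if anything, slightly more careful.
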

\begin{proof}
	Let $n=km+r$ for some $k\geq m$ and $1\leq r\leq m-1$. Then the set $\mathcal{F}_{\mathbf{t}_{m}}(n)$ can be separated into four disjoint parts:
	\begin{align*}
		S_{1} & := \{\alpha\sigma_{m}(v)\in \mathcal{F}_{\mathbf{t}_{m}}(n) \mid \alpha=a|_r,a\in\Sigma_{m},v\in\mathcal{F}_{\mathbf{t}_{m}}(k)\},\\
		S_{2} & := \{\sigma_{m}(v)\beta\in \mathcal{F}_{\mathbf{t}_{m}}(n)\mid \beta=b|_r,b\in\Sigma_{m},v\in\mathcal{F}_{\mathbf{t}_{m}}(k)\},\\
		S_{3} & := \{\alpha\sigma_{m}(v)\beta\in \mathcal{F}_{\mathbf{t}_{m}}(n) \mid \alpha=a|_i, \beta=b|_{r-i}, i\in[1,r-1], a,b\in\Sigma_{m},v\in\mathcal{F}_{\mathbf{t}_{m}}(k)\},\\
		S_{4} & := \{\alpha\sigma_{m}(v)\beta\in \mathcal{F}_{\mathbf{t}_{m}}(n) \mid \alpha=a|_i, \beta=b|_{m+r-i}, i\in[r+1,m-1], a,b\in\Sigma_{m}, v\in\mathcal{F}_{\mathbf{t}_{m}}(k-1)\}.
	\end{align*}
	According to Theorem \ref{main:result}, 	
	\begin{align}
		\sharp\{\Psi_{2}(w)\mid w\in S_1\} & = \sum_{a\in\Sigma_m}\sharp\{\Psi_{2}(a|_r\sigma_{m}(v)) \mid v\in\mathcal{F}_{\mathbf{t}_{m}}(k),a|_r\sigma_{m}(v)\in \mathcal{F}_{\mathbf{t}_{m}}(n)\}\nonumber\\
		& = \sum_{a\in\Sigma_m}\sharp\{\Psi(a'v)\mid a'v\in\mathcal{F}_{\mathbf{t}_{m}}(k+1), a'\equiv a+r\pmod{m}\}\nonumber\\
		& = m\left(1+\frac{m(m-1)}{2}\right),\label{thm:3-1}
	\end{align}
	where in the last step, we use Lemma \ref{a:r:a}. Similarly, we have 
	\begin{equation}\label{thm:3-2}
		\sharp\{\Psi_{2}(w)\mid w\in S_2\} = m\left(1+\frac{m(m-1)}{2}\right).
	\end{equation}
	
	For $i\in[1,r-1]$, write \[S_{3,i,a,b}:=\left\{\alpha\sigma_{m}(v)\beta\in \mathcal{F}_{\mathbf{t}_{m}}(n) \mid \alpha=a|_i, \beta=b|_{r-i},v\in\mathcal{F}_{\mathbf{t}_{m}}(k)\right\}.\] Then $S_3=\cup_{i=1}^{r-1}\cup_{a,b\in\Sigma_m}S_{3,i,a,b}$.
	By Lemma \ref{a:b:n}, \[\sharp\{\Psi(a'vb)\mid a'vb\in\mathcal{F}_{\mathbf{t}_{m}}(k+2),~a'\equiv a+i~(\bmod~{m})\}=\begin{cases}
			1, & \text{if }b\equiv a+i+k+1\pmod{m};\\
			m, & \text{otherwise}.
		\end{cases}\]
	It follows from Theorem \ref{main:result},
	\begin{align}
		\sharp\{\Psi_{2}(w)\mid w\in S_3\}  & = \sum_{i=1}^{r-1}\sum_{a,b\in\Sigma_m}\sharp{\{\Psi(a'vb)\mid a'vb\in\mathcal{F}_{\mathbf{t}_{m}}(k+2),~a'\equiv a+i\pmod{m}\}}\nonumber\\
		& = (r-1)m[1+m(m-1)]. \label{thm:3-3}
	\end{align}
	For $i\in[r+1,m-1]$ and $ a,b\in\Sigma_{m}$, write 
	\[S_{4,i,a,b} := \{\alpha\sigma_{m}(v)\beta\in \mathcal{F}_{\mathbf{t}_{m}}(n) \mid \alpha=a|_i, \beta=b|_{m+r-i}, v\in\mathcal{F}_{\mathbf{t}_{m}}(k-1)\}.\]
	By Theorem \ref{main:result} and Lemma \ref{a:b:n}, 
	\begin{align}
			\sharp\{\Psi_{2}(w)\mid w\in S_4\} & = \sum_{i=r+1}^{m-1}\sum_{a,b\in\Sigma_m}\sharp\{\Psi(a'vb)\mid a'vb\in\mathcal{F}_{\mathbf{t}_{m}}(k+1),~a'\equiv a+i\pmod{m}\}\nonumber\\
			& = (m-r-1)m[1+m(m-1)].\label{thm:3-4}
		\end{align}
	Combining \eqref{thm:3-1}, \eqref{thm:3-2}, \eqref{thm:3-3}, \eqref{thm:3-4} and Theorem \ref{main:result}, we have 
	\begin{align*}
		b_{\mathbf{t}_m,2}(n) & = \sharp\{\Psi_{2}(w)\mid w\in\mathcal{F}_{\mathbf{t}_m}(n)\}\\
		& = \sum_{j=1}^4\sharp\{\Psi_{2}(w)\mid w\in S_j\}
		= m^4-2m^3+2m^2. \qedhere
	\end{align*}
\end{proof}
\begin{proof}[Proof of Theorem \ref{main:result2}] 
The result follows from Theorem \ref{2:n:0} and Theorem \ref{2:n:r} directly. 
\end{proof}

\section*{Acknowledgement}
This work was supported by NSFC (Nos. 11801203, 11701202, 11871295), Guangzhou Science and Technology program (202102020294), Guangdong Basic and Applied Basic Research Foundation (2021A1515010056) and the Fundamental Research Funds for the Central Universities from SCUT (2020ZYGXZR041).

\end{document}